\newcommand{\R}{\mathbb{R}}
\numberwithin{equation}{section}
\newtheorem{theorem}{Theorem}[section]
\newtheorem{lemma}[theorem]{Lemma}
\newtheorem{proposition}[theorem]{Proposition}
\newtheorem{remark}[theorem]{Remark}
\theoremstyle{definition}
\newtheorem{definition}[theorem]{Definition}
\newcommand{\acks}[1]{\section*{Acknowledgements}#1}
\title{On the Synchronization Analysis of a Strong Competition Kuramoto Model}
\author[1]{Chun-Hsiung Hsia}
\author[2]{Chung-En Tsai}
\affil[1]{Institute of Applied Mathematical Sciences and National Center for Theoretical Sciences, National Taiwan University, Taipei 10617, Taiwan }
\affil[2]{Department of Computer Science and Information Engineering, National Taiwan University, Taipei 10617, Taiwan }
\date{\vspace{-5ex}}
\begin{document}
\maketitle

\begin{abstract}

When modeling  the classical Kuramoto model, one of the key features is the tendency to synchronize. Accordingly, the most well-adopted choice of the coupling function is the sine function. 
 Due to the oddness of the sine function, the synchronized frequency would be the average of all the natural frequencies. In this article, we study   the synchronization behaviors of the Kuramoto model with a pure competition coupling function. Namely, instead of the sine function, we choose  $\max \{0, \sin  \theta \}$ to be the  coupling function. This indicates the relation of  pure competition between oscillators. We prove asymptotical phase synchronization for identical oscillators and asymptotical frequency synchronization for non-identical oscillators under reasonable sufficient conditions. In particular, under our sufficient conditions, the synchronized frequency is the maximal frequency of all the natural frequencies. On the other hand, in the parameter regime which is out of the scope of the analysis of our theorems, it is possible that the synchronized frequency could be larger than the maximal frequency of the natural frequencies of all the oscillators. In this article, we also provide numerical experiments to support the analysis of our theorem and to demonstrate the aforementioned phenomenon. 
\end{abstract}


\section{Introduction}

 Synchronization phenomenon appears in a variety of natural systems, including pendulum clocks, triode generators, Josephson junction arrays, circadian rhythms, menstrual cycles, and fireflies \parencite{Acebron2005a,  Pikovsky2001a, Rodrigues2016a, Strogatz2000a}.

Among the mathematical models which describe the synchronous behaviors of a collection of oscillators, the one  proposed by \textcite{Kuramoto1975a,Kuramoto1984a} has received most attention. 
The model is formulated as a system of $N$ ordinary differential equations:
\begin{equation}\label{eq_kuramoto}
	\dot{\theta}_i(t) = \omega_i + \sum_{j=1}^N \Gamma(\theta_j(t) - \theta_i(t)), \quad  i  = 1, 2, \ldots, N.
\end{equation}
In this model, $\{\theta_i(t)\}_{i=1}^N$ is the set of oscillators, $\omega_i\in\R$ is the natural frequency of the $i$-th oscillator, and $\Gamma$ is a continuous and $2\pi$-periodic coupling function.
The oscillators are said to be \emph{identical} if all natural frequencies are the same, i.e.,
$$ \omega_i = \omega_j, \text{ for } i, j =1,2,3, \cdots, N. $$
\textcite{Kuramoto1975a} studied the sinusoidal coupling $\Gamma(\phi) = k\sin\phi$ for some coupling strength $k>0$, which is arguably the simplest and the most tractable case.
In the following, we refer 

\begin{equation}\label{eq_kuramoto_classical}
	\dot{\theta}_i(t) = \omega_i + k \sum_{j=1}^N \sin(\theta_j(t) - \theta_i(t)), \quad  i =1,2,\ldots, N.
\end{equation}
as the \emph{classical} Kuramoto model.

In this article, we analyze both phase synchronization and frequency synchronization. Here are some notation conventions and definitions that we shall use.
\paragraph{Notations.}
We denote
\begin{equation*}
  \Theta(t) \coloneqq (\theta_1(t),\ldots,\theta_N(t)) \,\, \text{ and } \,\,
 \Omega \coloneqq (\omega_1,\ldots,\omega_N).
 \end{equation*}
 For $X=(x_1, x_2, x_3, \cdots, x_N) \in \mathbb R^N$, we define the diameter function as
 $$D(X) \coloneqq \max_{1\leq i,j\leq N}\abs{x_i-x_j}.$$

\begin{definition}[Complete phase synchronization]
A solution $\Theta(t)$ to the system \eqref{eq_kuramoto} is said to achieve a complete phase synchronization asymptotically if for any $i,j \in \{1, 2, 3, \cdots, N\}$, there exists $n_{ij}\in \mathbb Z$ such that $\lim_{t\to\infty} (\theta_i(t) - \theta_j(t) - 2n_{ij}\pi) = 0$.
\end{definition}

\begin{definition}[Complete frequency synchronization]
A solution $\Theta(t)$ to the system \eqref{eq_kuramoto} is said to achieve a complete frequency synchronization asymptotically if for any $i,j\in \{1, 2, 3, \cdots, N\}$, we have $\lim_{t\to\infty} ( \dot{\theta}_i(t) - \dot{\theta}_j(t)) = 0$.
\end{definition}

For the classical Kuramoto model \eqref{eq_kuramoto_classical}, the critical coupling strength \parencite{Kuramoto1975a,Verwoerd2008a,Dorfler2011a}, bifurcation \parencite{Daido2016a}, and initial configurations that lead to synchronization \parencite{Benedetto2015a,  Chopra2009a, Dong2013a} have been studied.
It was proved in \parencite{Chopra2009a} that if the oscillators achieve complete frequency synchronization, the synchronized frequency equals the average of all the natural frequencies, i.e., 
\begin{equation}
\label{kuramoto-sync}
\lim_{t \to \infty} \dot{\theta_i}(t) = \frac {\sum_{j=1}^N \omega_j }{N} , \,\, \text { for }  i =1,2,3, \cdots, N. 
\end{equation}
Mathematically speaking, this is due to the fact that the sine function is an odd function; that is, $\Gamma(\phi) = -\Gamma(-\phi)$. To see this, summing up \eqref{eq_kuramoto_classical} over $i =1, 2, 3 \cdots, N$, we obtain 
\begin{equation}
\label{sumKuramoto}
\lim_{t \to \infty} \sum_{i=1}^N \dot{\theta_i}(t) = \sum_{j=1}^N \omega_j.
\end{equation}
If there exists $\omega \in \mathbb R$ such that $\lim_{t \to \infty} \dot{\theta}_i(t) = \omega$  for $i=1, 2, 3, \cdots, N$,  inferring from \eqref{sumKuramoto}, we obtain \eqref{kuramoto-sync}.
From the viewpoint of modeling, the choice of the sine function means that the leading one would have a tendency to slow down for the trailing ones and the trailing one would have a tendency to speed up, which indicates that the oscillators have a tendency to synchronize. This choice  of the coupling functions allows a structure of Lyapunov function for \eqref{eq_kuramoto_classical} which provides a systematic method to analyze the synchronization problem for \eqref{eq_kuramoto_classical}.
It turns out that the oddness of the coupling function is crucial for analyses based on Lyapunov function \parencite{Dong2013a, Hsia2019a, Van-Hemmen1993a} and the order parameter \parencite{Benedetto2015a}. Concerning the effect of time delay, we refer to \cite{Hsia2020a}. 
For other results, we refer the interested readers to the surveys of \textcite{Strogatz2000a}, \textcite{Acebron2005a} and \textcite{Rodrigues2016a}.

It is of mathematical interest to study the class of coupling functions that lead to synchronization.
For example, \textcite{Sakaguchi1986a,Ha2014a} studied the Kuramoto model with the phase-lag effect, which involves a non-odd coupling function $\Gamma(\phi) = k\sin(\phi + \alpha)$ for some $0<\abs{\alpha}<\pi/2$.
Coupling functions with bi-harmonics \parencite{Hansel1993a,Skardal2011a,Komarov2013a,Li2014a,Eydam2017a,Wang2017a} and higher-order harmonics \parencite{Daido1996a,Delabays2019a,Gong2019a,Li2019a} have also been studied in the literature.

In this article, we investigate the synchronization of a strong competition Kuramoto model beyond the sinusoidal coupling.
That is, we take $\Gamma(\theta) = k\max(0,\sin\theta)$ for some coupling strength $k>0$ as the coupling function.
This coupling function is  piecewise differentiable and non-odd.
With this choice of the coupling function, the model \eqref{eq_kuramoto} is rewritten as
\begin{equation}\label{eq_max_kuramoto}
	\dot{\theta}_i(t) = \omega_i + k\sum_{j=1}^N \max\{0, \sin(\theta_j(t) - \theta_i(t))\}, \quad   i = 1, 2, \cdots, N.
\end{equation}
We refer this model as the \emph{strong competition Kuramoto} model (SC Kuramoto model) afterwards.

\begin{remark}
It is not hard to check, for both \eqref{eq_kuramoto_classical} and \eqref{eq_max_kuramoto},  that if the oscillators achieve the complete phase synchronization, then they must be identical oscillators, i.e., $\omega_i = \omega_j,$ for $i, j  = 1, 2, 3, \cdots, N.$
\end{remark}

In the SC Kuramoto model, the $i$-th oscillator is affected  by the $j$-th oscillator only when the phase of the $j$-th oscillator is ``in front of'' the phase of the $i$-th oscillator.
This type of dynamic coupling has been considered by \textcite{Yang2020a,Yang2020b} and \textcite{Ho2024a} recently.
\textcite{Yang2020a,Yang2020b} are motivated by the phenomenon of the off-the-average synchronized frequency in several natural systems, such as the finger-tapping experiment and the applause of the audiences. However, they  conducted numerical experiments without rigorous mathematical analysis. \textcite{Ho2024a} develop a novel experimental assay that enables direct quantification of synchronization dynamics within mixtures of oscillating cell ensembles, for which the initial input frequency and phase distribution are known. Their results reveal a ``winner-takes-it-all'' synchronization outcome, i.e., the emerging collective rhythm matches one of the input rhythms. As shown in our main theorems, we use rigorous mathematical analysis to show that  the synchronized frequency is the largest natural frequency. 
\begin{theorem}\label{thm_phase_sync} Assume   $D(\Omega)=0$.
Let $\Theta(t)$ be a solution to \eqref{eq_max_kuramoto} with
 $D(\Theta(0)) < \pi$, then $\lim_{t\to\infty}D(\Theta(t)) = 0$;
that is, the oscillators achieve complete phase synchronization asymptotically.
\end{theorem}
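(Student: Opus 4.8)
The plan is to track the evolution of the diameter $D(\Theta(t))$ and show it decays to zero; then $\abs{\theta_i(t)-\theta_j(t)}\le D(\Theta(t))\to 0$ gives complete phase synchronization with every $n_{ij}=0$. Since $D(\Omega)=0$, all natural frequencies equal a common value $\omega$, and because the right-hand side of \eqref{eq_max_kuramoto} depends only on the phase differences $\theta_j-\theta_i$, the substitution $\phi_i=\theta_i-\omega t$ removes $\omega$ while leaving every difference, and hence $D(\Theta(t))$, unchanged; so I may assume $\omega=0$. The first step is a sign analysis at the extremal oscillators. Fix a time $t$ with $D(\Theta(t))<\pi$, and let $M$ and $m$ be indices attaining $\theta_M(t)=\max_i\theta_i(t)$ and $\theta_m(t)=\min_i\theta_i(t)$. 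For the leading oscillator every difference satisfies $\theta_j-\theta_M\in(-\pi,0]$, where $\sin$ is nonpositive, so each term $\max\{0,\sin(\theta_j-\theta_M)\}$ vanishes and $\dot\theta_M=0$. For the trailing oscillator every difference satisfies $\theta_j-\theta_m\in[0,\pi)$, where $\sin$ is nonnegative, so $\max\{0,\sin(\theta_j-\theta_m)\}=\sin(\theta_j-\theta_m)\ge 0$ and, retaining just the $j=M$ term, $\dot\theta_m\ge k\sin(\theta_M-\theta_m)=k\sin\big(D(\Theta(t))\big)$.

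Second, I would convert this into a differential inequality for $D$. Writing $D(\Theta(t))=\max_{i,j}\big(\theta_i(t)-\theta_j(t)\big)$ exhibits it as the maximum of finitely many $C^1$ functions, hence it is locally Lipschitz and, for almost every $t$, its derivative equals $\dot\theta_{i^*}-\dot\theta_{j^*}$ for a pair $(i^*,j^*)$ attaining the maximum, i.e.\ $(i^*,j^*)=(M,m)$. Combining with the previous step gives
\begin{equation*}
\frac{d}{dt}D(\Theta(t)) = \dot\theta_M - \dot\theta_m \le -k\sin\big(D(\Theta(t))\big)
\end{equation*}
for a.e.\ $t$ at which $D(\Theta(t))<\pi$.

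Third, I would close the argument by a forward-invariance and comparison step. Since $\sin D\ge 0$ on $[0,\pi)$, the inequality shows $D$ is non-increasing wherever $D<\pi$; as $D(\Theta(0))<\pi$, a standard continuation argument keeps $D(\Theta(t))<\pi$ and non-increasing for all $t\ge 0$, so the region $\{D<\pi\}$ is positively invariant and the extremal sign analysis above is self-consistent. Comparing with the scalar ODE $\dot u=-k\sin u$, $u(0)=D(\Theta(0))$, whose solutions decay to $0$, yields $0\le D(\Theta(t))\le u(t)\to 0$; concretely, the concavity bound $\sin D\ge (\sin D_0/D_0)\,D$ on $[0,D_0]$ with $D_0=D(\Theta(0))$ produces exponential decay.

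I expect the main obstacle to be the rigorous treatment of non-differentiability: the coupling function $\max\{0,\sin\}$ is only piecewise $C^1$, and $D$, being a maximum over pairs, is not everywhere differentiable. The clean way around this is to use upper Dini derivatives together with the differentiation rule for the maximum of finitely many $C^1$ functions; once that rule is in place, the sign analysis at the extremes is elementary and the remaining estimates are routine. A secondary point requiring care is verifying the self-consistency of the extremal sign analysis through the forward invariance of $\{D<\pi\}$, which the monotonicity of $D$ supplies.
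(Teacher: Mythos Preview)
Your proposal is correct and follows essentially the same approach as the paper: both track the diameter $D(\Theta(t))$ by computing $\dot\theta_M-\dot\theta_m$ at the extremal oscillators, using that the leading oscillator receives no positive coupling while the trailing one receives at least $k\sin D$. The only difference is packaging: the paper extracts this computation as a sector-trapping lemma (giving $D\le\delta$ after an explicit time $T_0(\delta)$, for every $\delta\in(0,\pi/2)$) and then lets $\delta\to 0$, whereas you go straight to the differential inequality $\dot D\le -k\sin D$ and invoke comparison; your version yields an explicit exponential rate, while the paper's lemma is stated for general $D(\Omega)$ so that it can be reused in the non-identical case.
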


\begin{theorem}\label{thm_freq_sync}
Assume  $k > D(\Omega)/\sin\delta$ for some $\delta \in (0,\pi/2)$\ and 
\begin{equation}
\label{omega_order}
\omega_1 \geq \omega_2 \geq \cdots \geq \omega_N.
\end{equation}
	Let $\Theta(t)$ be a solution to \eqref{eq_max_kuramoto} with $D(\Theta(0)) < \pi - \delta$, then 
	\begin{equation}
	\label{omega_max}
	\lim_{t \to \infty}\dot{\theta}_i(t) = \omega_1 = \max \{\omega_1, \omega_2, \cdots, \omega_N\} \,\,  \text{ for } i = 1, 2, 3, \cdots, N.
	\end{equation}
	In other words, the oscillators achieve a complete frequency synchronization asymptotically, and the synchronized frequency is the largest natural frequency.
\end{theorem}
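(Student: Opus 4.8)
The plan is to argue in three stages: first force the phases into an arc where the competitive coupling behaves monotonically, then pin the synchronized frequency to $\omega_1$ through the leading oscillator, and finally upgrade phase cohesiveness to frequency synchronization by a consensus estimate on the velocities. Throughout I work with the upper Dini derivative $\frac{d^+}{dt}$ (so that the nonsmoothness of $\max\{0,\sin\theta\}$ and of the $\max$/$\min$-over-index functions is handled by Danskin's theorem, the kink set having measure zero).

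\emph{Stage 1 (phase cohesiveness).} Write $M(t)=\max_i\theta_i(t)$ and $m(t)=\min_i\theta_i(t)$, so $D(\Theta)=M-m$. As long as $D(\Theta)<\pi$, a maximizing index $i_M$ has every other phase in the arc $[-\pi,0]$ behind it, whence $\max\{0,\sin(\theta_j-\theta_{i_M})\}=0$ and $\frac{d^+}{dt}M=\omega_{i_M}$; a minimizing index $i_m$ has all phase differences in $[0,\pi)$, so $\frac{d^+}{dt}m=\omega_{i_m}+k\sum_j\sin(\theta_j-\theta_{i_m})\ge \omega_{i_m}+k\sin D(\Theta)$, using the term $j=i_M$. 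Subtracting gives $\frac{d^+}{dt}D(\Theta)\le D(\Omega)-k\sin D(\Theta)$. Since $k\sin\delta>D(\Omega)$, the right-hand side is strictly negative at $D=\pi-\delta$, so $\{D\le\pi-\delta\}$ is positively invariant; moreover $\sin D\ge\sin\delta$ on $[\delta,\pi-\delta]$ forces $D$ to strictly decrease there, yielding $\limsup_{t\to\infty}D(\Theta(t))\le\arcsin(D(\Omega)/k)=:D_\ast<\delta<\pi/2$. Hence there is $T_0$ with $D(\Theta(t))<\pi/2$ for all $t\ge T_0$.

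\emph{Stage 2 (identification of the limit frequency).} Let $g(t)=M(t)-\theta_1(t)\ge 0$. Because $\frac{d^+}{dt}M=\omega_{i_M}\le\omega_1$ and the coupling acting on oscillator $1$ is at least $k\sin(\theta_{i_M}-\theta_1)=k\sin g$, I obtain $\frac{d^+}{dt}g\le(\omega_{i_M}-\omega_1)-k\sin g\le -k\sin g$, so $g(t)\to 0$: the fastest oscillator overtakes everyone and occupies the front. Since then every difference $\theta_j-\theta_1\le g\to 0$, the coupling sum on oscillator $1$ vanishes in the limit and $\dot\theta_1\to\omega_1$. This both singles out $\omega_1$ and shows that if the velocities converge to a common value it can only be $\omega_1$ (indeed $\dot\theta_1\ge\omega_1$ always, while $\frac{d^+}{dt}M=\omega_{i_M}\le\omega_1$ caps the common value from above).

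\emph{Stage 3 (frequency synchronization) and the main obstacle.} For $t\ge T_0$ I differentiate \eqref{eq_max_kuramoto} to get, a.e., the consensus-type system $\dot\eta_i=k\sum_{\theta_j-\theta_i\in(0,\pi)}\cos(\theta_j-\theta_i)(\eta_j-\eta_i)$ with $\eta_i=\dot\theta_i$. With $D(\Theta)<\pi/2$ every active weight lies in $[k\cos D_\ast,\,k]$, and crucially every oscillator is an active forward-neighbor of the current leader $i_M$ (as $0<\theta_{i_M}-\theta_i<\pi/2$), so the leader influences all nodes with weight bounded below. Then $\max_i\eta_i$ is non-increasing and $\min_i\eta_i$ is non-decreasing, so $D(\dot\Theta)$ is non-increasing; exploiting the uniform common-neighbor structure I expect to establish a contraction $\frac{d^+}{dt}D(\dot\Theta)\le -c\,D(\dot\Theta)$ for some $c>0$ (or conclude via a LaSalle/Barbalat argument), giving $D(\dot\Theta(t))\to 0$, which together with $\dot\theta_1\to\omega_1$ yields $\dot\theta_i\to\omega_1$ for all $i$. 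The hard part is precisely this contraction: the interaction graph is \textbf{directed} and time-varying, since only strictly-ahead oscillators exert force, so the symmetric-Laplacian consensus theory does not apply; the argument must hinge on the phase-leader being a common downstream neighbor of everyone, a property available only after Stage 1 has squeezed $D(\Theta)$ below $\pi/2$.
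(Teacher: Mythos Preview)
Your proposal is correct and follows a genuinely different route from the paper's proof.

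\textbf{What the paper does.} After the same sector--trapping step, the paper proves a \emph{well--ordering lemma}: once $D(\Theta)\le\delta$, oscillators with larger natural frequency eventually move strictly ahead of those with smaller frequency. It then partitions the oscillators into groups $\Theta^{(1)},\dots,\Theta^{(M)}$ of equal natural frequency and runs an induction on $m$: the leading group $\Theta^{(1)}$ sees nobody in front, so it obeys the identical--oscillator equation and phase--synchronizes, forcing $\dot\theta^{(1)}_i\to\omega_1$; for group $m$ the equation involves only groups $1,\dots,m-1$ in front (whose velocities are already known to converge to $\omega_1$) plus intra--group competitive coupling, and a Dini--derivative bound on $\max_i\dot\theta^{(m)}_i$ and $\min_i\dot\theta^{(m)}_i$ pushes them to $\omega_1$ as well.

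\textbf{What you do differently.} You never order the oscillators by frequency and never induct on groups. Instead you (i) show directly that $M(t)-\theta_1(t)\to 0$, hence $\dot\theta_1\to\omega_1$; and (ii) differentiate the system once and run a global consensus argument on $\eta=\dot\Theta$, using that the current phase--leader $i_M$ is a common in--neighbor of every oscillator with edge weight $\ge k\cos\delta$. Your hesitation in Stage~3 is unnecessary: writing $a=\eta_{i_M}-\min_j\eta_j\ge 0$ and $b=\max_j\eta_j-\eta_{i_M}\ge 0$ one gets $D_+\max_j\eta_j\le -k(\cos\delta)\,b$ and $D_+\min_j\eta_j\ge k(\cos\delta)\,a$, hence $D_+D(\dot\Theta)\le -k(\cos\delta)\,D(\dot\Theta)$, the exponential contraction you anticipate with $c=k\cos\delta$ (the boundary cases $i^*=i_M$ or $i_*=i_M$ are covered since then $b=0$ or $a=0$). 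Combined with $\dot\theta_1\to\omega_1$ this closes the argument.

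\textbf{Comparison.} Your approach is shorter and avoids both the well--ordering lemma and the group--by--group induction; it treats all oscillators symmetrically through the single spread $D(\dot\Theta)$. The paper's route, in exchange, yields the finer structural statement that phases become ordered by natural frequency (their Lemma~3.3), which you do not obtain and do not need. Two minor remarks: your phrase ``common downstream neighbor'' should read ``common in--neighbor'' (the leader influences everyone, not the reverse); and the right--derivative bookkeeping at the kinks $\theta_j=\theta_i$ (where $\max\{0,\sin(\cdot)\}$ is not differentiable) deserves one line---the paper handles this explicitly via its $\Delta_{i\ell}$ case analysis, and the same computation shows your inequalities survive at those points.
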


\begin{remark}$~$
\begin{enumerate}
\item[(a)]
Under the assumption of Theorem~\ref{thm_freq_sync}, inferring from the well-ordering lemma (see Lemma~\ref{lem_order}), we see that no oscillator would be ``in front of'' $\theta_1(t)$ for all $ t \geq (\pi-2\delta)/(k\sin\delta - D(\Omega))$. This implies the synchronized frequency of $\theta_1$ would be $\omega_1$, which implies \eqref{omega_max}.\\
\item[(b)]
For the classical Kuramoto model \eqref{eq_kuramoto_classical}, it is known that $k > D(\Omega)/(N\sin\delta)$ suffices to achieve frequency synchronization \parencite{Chopra2009a}.
So the requirement for the coupling strength in Theorem~\ref{thm_freq_sync} is greater than that in the classical Kuramoto model.
Nevertheless, this is unavoidable.
Consider $N$ oscillators with $\omega_1 = 0$, $\omega_2=\cdots=\omega_N=\omega < 0$, $\theta_1(0) = \pi/2$, and $\theta_2(0)=\cdots=\theta_N(0) = 0$.
We must have $k\geq  \abs{\omega} = D(\Omega)$ to ensure frequency synchronization.\\
\item[(c)] For the SC Kuramoto model, in the parameter regime which is out of the scope of the analysis of our theorem, it is possible that the synchronized frequency could be larger than the maximal frequency of the natural frequencies of all the oscillators. See the example provided in Section~\ref{sec4.3}.
\end{enumerate}
\end{remark}

Briefly speaking,  identical oscillators achieve complete phase synchronization asymptotically if the diameter of initial phases is strictly less than $\pi$; while 
the non-identical oscillators achieve complete frequency synchronization asymptotically if the diameter of initial phases is  less than $\pi- \delta$ and the coupling strength is larger than  $D(\Omega)/\sin\delta$.
Moreover, the synchronized frequency equals the maximal natural frequency for the non-identical oscillators.

Our analysis relies partially on the diameter function, which is a common technique in the literature \parencite{ Chopra2009a, Choi2012a,Ha2014a,Hsia2020a}; see Lemma~\ref{lem_sector_trapping}.
Since the coupling function in \eqref{eq_max_kuramoto} is neither odd nor analytic,  methods based on the {\L}ojasiewicz gradient inequality, Lyapunov function \parencite{Dong2013a,Ha2013a} and the order-parameter \parencite{Benedetto2015a} cannot be applied. 
The proof of Theorem~\ref{thm_phase_sync} is based on the sector trapping property described  by Lemma~\ref{lem_sector_trapping}.
Besides employing Theorem~\ref{thm_phase_sync},  the proof of Theorem~\ref{thm_freq_sync}   relies on  a well-ordering property of  the solutions of \eqref{eq_max_kuramoto} described in  Lemma~\ref{lem_order}, a refinement of Lemma~\ref{lem_sector_trapping}, which shows that after sufficiently long time, oscillators with larger natural frequencies will be ahead in phase of those with smaller natural frequencies.

The rest part of this article  is organized as follows.  We prove Theorem~\ref{thm_phase_sync} in Section 2. In Section 3, we prove the well-ordering property and Theorem~\ref{thm_freq_sync}.  In Section 4, we demonstrate numerical experiments for  \eqref{eq_max_kuramoto}, and
we make a comparison between the classical Kuramoto model \eqref{eq_kuramoto_classical} and the SC Kuramoto model \eqref{eq_max_kuramoto}.

\section{Identical Oscillators for SC Kuramoto Model}
 In this section, we consider the SC Kuramoto model \eqref{eq_max_kuramoto} with identical oscillators and give a proof to Theorem~\ref{thm_phase_sync}.
Theorem~\ref{thm_phase_sync} demonstartes that if the initial phases are confined in a half circle, then the oscillators achieve phase synchronization.

We start with the following lemma.
It states that the oscillators will concentrate in a small region for  large coupling strength.
Similar lemmas have been used in the literature \parencite{Choi2012a,Ha2014a,Hsia2019a,Hsia2020a}.

\begin{lemma}[Sector trapping lemma]\label{lem_sector_trapping}
Assume  $k>D(\Omega)/\sin\delta$ for some $\delta\in(0,\pi/2)$. Let $\Theta(t)$ be a solution to \eqref{eq_max_kuramoto} with
$D(\Theta(0)) \leq\pi - \delta$, then $D(\Theta(t)) \leq \delta$ for
\begin{equation}\label{T0}
	t \geq T_0 : = \frac{\pi-2\delta}{k\sin\delta - D(\Omega)}.
\end{equation}
\end{lemma}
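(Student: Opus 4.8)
The plan is to control the diameter by writing $D(\Theta(t)) = \theta_{\max}(t) - \theta_{\min}(t)$ with $\theta_{\max}(t)\coloneqq\max_i\theta_i(t)$ and $\theta_{\min}(t)\coloneqq\min_i\theta_i(t)$ (these agree with $D$ as long as the spread stays below $\pi$), and to show it contracts at a uniform positive rate while it remains in the band $[\delta,\pi-\delta]$. The structural fact that makes the competition coupling tractable is that the \emph{leading} oscillator decouples: if $i$ attains $\theta_{\max}$ then $\theta_j-\theta_i\in[-D(\Theta(t)),0]\subseteq[-\pi,0]$ for every $j$ whenever $D(\Theta(t))\le\pi$, so $\max\{0,\sin(\theta_j-\theta_i)\}=0$ for all $j$ and hence $\dot\theta_i=\omega_i$. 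Dually, if $j$ attains $\theta_{\min}$ then every $\theta_\ell-\theta_j\in[0,D(\Theta(t))]\subseteq[0,\pi]$, so each summand in \eqref{eq_max_kuramoto} is nonnegative and the contribution of a maximal oscillator is already $k\sin\!\big(D(\Theta(t))\big)$; thus $\dot\theta_j\ge\omega_j+k\sin\!\big(D(\Theta(t))\big)$.

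I would then pass to the upper right Dini derivative of $D(\Theta(\cdot))$, which is the technical heart of the argument since neither $D$ nor the coupling is everywhere differentiable. Using the envelope identities $D^+\theta_{\max}=\max_{i\in I_{\max}(t)}\dot\theta_i$ and $D_+\theta_{\min}=\min_{j\in I_{\min}(t)}\dot\theta_j$ over the active index sets, together with $D^+D\le D^+\theta_{\max}-D_+\theta_{\min}$ and the two bounds above, one obtains
\[
D^+D(\Theta(t))\;\le\;\max_i\omega_i-\min_j\omega_j-k\sin\!\big(D(\Theta(t))\big)\;=\;D(\Omega)-k\sin\!\big(D(\Theta(t))\big),
\]
which is valid at every time for which $D(\Theta(t))\le\pi$.

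A comparison-and-invariance argument then finishes the proof. At $D=\pi-\delta$ the right-hand side equals $D(\Omega)-k\sin\delta<0$ by the hypothesis $k>D(\Omega)/\sin\delta$; a standard continuity argument (if $D$ first exited $[0,\pi-\delta]$ at a time $t_1$, then $D^+D(t_1)<0$ would contradict the exit) shows that $\{D\le\pi-\delta\}$ is forward invariant, so $D(\Theta(t))<\pi$ for all $t$ and the decoupling computation above is always legitimate. On the band $\delta\le D\le\pi-\delta$ we have $\sin D\ge\sin\delta$, hence $D^+D\le -\,c$ with $c\coloneqq k\sin\delta-D(\Omega)>0$; integrating this one-sided inequality, $D$ decreases at rate at least $c$ until it reaches $\delta$, which happens no later than time $\big((\pi-\delta)-\delta\big)/c=T_0$ as defined in \eqref{T0}. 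Applying the same invariance argument at the level $D=\delta$ (where again $D^+D<0$) shows $\{D\le\delta\}$ is forward invariant, so $D(\Theta(t))\le\delta$ persists for all $t\ge T_0$.

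The main obstacle I anticipate is the non-smooth bookkeeping rather than any single estimate: one must justify the envelope formulas for $D^+\theta_{\max}$ and $D_+\theta_{\min}$ when several oscillators are simultaneously extremal, and convert the one-sided differential inequality $D^+D\le D(\Omega)-k\sin D$ into rigorous hitting-time and forward-invariance conclusions via a comparison principle for Dini derivatives. Once that framework is set up, the rate estimate, the elementary bound $\sin D\ge\sin\delta$ on $[\delta,\pi-\delta]$, and the decoupling identity $\dot\theta_i=\omega_i$ at the front are all routine.
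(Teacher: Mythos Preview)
Your proposal is correct and follows essentially the same approach as the paper: both observe that the leading oscillator decouples ($\dot\theta_i=\omega_i$) while the trailing one satisfies $\dot\theta_j\ge\omega_j+k\sin D(\Theta(t))$, yielding the differential inequality $D^+D(\Theta(t))\le D(\Omega)-k\sin D(\Theta(t))$ and hence a uniform contraction rate $k\sin\delta-D(\Omega)$ on the band $[\delta,\pi-\delta]$. Your treatment is in fact more careful than the paper's about the Dini-derivative bookkeeping and the forward-invariance arguments at the levels $\pi-\delta$ and $\delta$, which the paper handles only informally.
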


\begin{proof}
	Notice that $\pi-\delta>\delta$ and $D(\Theta(0))\leq \pi - \delta$.
	If at some time $s\geq 0$, we have $D(\Theta(s)) \in (\delta,\pi-\delta)$ for any $i, j \in \{1, 2, 3, \cdots, N \}$ such that $\theta_i(s) - \theta_j(s) = D(\Theta(s))$,
	 then we see that
	$$
	\theta_j(s) \le \theta_{\ell} (s) \le \theta_i(s), \text{ for } \ell =1,2, \cdots, N,  
	$$
	 and 
	\[
		\dot{\theta}_i(s) - \dot{\theta}_j(s)
		= \omega_i - \omega_j - k\sum_{\ell=1}^N \sin(\theta_\ell(s) - \theta_j(s))
		\leq D(\Omega) - k\sin(\theta_i(s) - \theta_j(s)).
	\]
	Since $\sin(\theta_i(s) - \theta_j(s)) > \sin\delta$ for $ \theta_i(s) - \theta_j(s) \in  (\delta,\pi-\delta) $  and  $ D(\Omega) < k \sin \delta $, we see that
	\[
		\dot{\theta}_i(s) - \dot{\theta}_j(s) < D(\Omega) - k\sin\delta < 0.
	\]
	Hence, $D(\Theta(t))$ decreases at a rate faster than $D(\Omega) - k\sin\delta$.
	This proves Lemma~\ref{lem_sector_trapping}.
\end{proof}

Now, we are in a position to prove Theorem~\ref{thm_phase_sync}.

\begin{proof}[Proof of Theorem~\ref{thm_phase_sync}]
	For any $\delta>0$,	by Lemma~\ref{lem_sector_trapping}, $D(\Theta(t))\leq\delta$ for $t\gg 0$.
	Hence, $\lim_{t\to\infty}D(\Theta(t)) = 0$.
\end{proof}

\section{Non-identical Oscillators  for SC Kuramoto Model}
In this section, we analyze  the the SC Kuramoto model for  non-identical oscillators.
Theorem~\ref{thm_freq_sync} shows that if the initial phases are confined in a half circle and the coupling strength is  large, then the oscillators achieve frequency synchronization asymptotically. 

For a real-valued function $f$ defined on an open set $U\subseteq\R$, we define its right derivative (if the limit exists) by
\[
	D_+f(x) = \lim_{h\downarrow 0}\frac{f(x+h) - f(x)}{h}, \quad\forall x\in U.
\]

The following proposition will be used in this section.
The proof of the proposition is omitted since it is straightforward.
\begin{proposition}\label{prop_max_derivative}
	Let $U\subseteq\R$ be an open set and $f_1,\ldots,f_n:U\to\R$ be continuous functions having right derivatives.
	Define $F(x) = \max_{1 \le i \le n}f_i(x)$.
	Then, we have
	\[
		D_+F(x) = \max_{i\in I_x} D_+f_i(x),\quad\forall x\in U,
	\]
	where $I_x \coloneqq \{ 1 \le i \le n  \mid F(x) = f_i(x)\}$.
\end{proposition}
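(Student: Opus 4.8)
The plan is to \emph{localize} the maximum to the active indices. Since $F$ is a pointwise maximum of finitely many functions, the only indices that can influence the right derivative at $x$ are those in $I_x$: the remaining functions are strictly dominated at $x$ and, by continuity, stay dominated on a one-sided neighborhood. Concretely, I would note that for each $i\notin I_x$ and each $j\in I_x$ we have $f_i(x) < f_j(x)$, so the continuous function $h\mapsto f_j(x+h)-f_i(x+h)$ is positive at $h=0$ and hence positive on some interval $(0,\delta_{ij})$. Taking $\delta_0$ to be the minimum of the finitely many $\delta_{ij}$ (with the convention $\delta_0=\infty$ when $I_x=\{1,\ldots,n\}$), one obtains
\[
	F(x+h) = \max_{i\in I_x} f_i(x+h), \qquad 0 < h < \delta_0.
\]

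Next I would rewrite the difference quotient on this right neighborhood. Because $f_i(x)=F(x)$ for every $i\in I_x$, for $0<h<\delta_0$ we have
\[
	\frac{F(x+h)-F(x)}{h}
	= \frac{\max_{i\in I_x} f_i(x+h) - \max_{i\in I_x} f_i(x)}{h}
	= \max_{i\in I_x}\frac{f_i(x+h)-f_i(x)}{h},
\]
where the last equality uses that the positive factor $1/h$ may be pulled through a maximum over the finite set $I_x$. Letting $h\downarrow 0$, each inner quotient tends to $D_+f_i(x)$ by hypothesis, and since the maximum of finitely many convergent quantities converges to the maximum of their limits, the right-hand side tends to $\max_{i\in I_x}D_+f_i(x)$. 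This shows that the limit defining $D_+F(x)$ exists and equals $\max_{i\in I_x}D_+f_i(x)$, as claimed.

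The only step requiring genuine care is the localization, that is, discarding the inactive indices on a right neighborhood; this is precisely where continuity of the $f_i$ is used, and it is the reason the hypothesis is stated for continuous functions. By contrast, the interchange of the maximum with the limit is elementary because $I_x$ is finite. The degenerate case $I_x=\{1,\ldots,n\}$ needs no localization at all, since then $F(x+h)=\max_{i\in I_x}f_i(x+h)$ holds for every $h$ and the same computation applies verbatim.
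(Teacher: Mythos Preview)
Your argument is correct; the paper itself omits the proof as ``straightforward,'' and what you have written is precisely the natural justification one would supply. The two key ingredients---using continuity to discard indices outside $I_x$ on a one-sided neighborhood, and then passing the limit through a finite maximum---are exactly right and carefully handled.
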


As mentioned in the introduction, the proof of Theorem~\ref{thm_freq_sync} relies on Lemma~\ref{lem_order}, a well-ordering property of the solutions of \eqref{eq_max_kuramoto}.
To prove Lemma~\ref{lem_order}, we prepare the following lemma.

\begin{lemma}\label{lem_interchange}
		Let $\Theta(t)$ be a solution to \eqref{eq_max_kuramoto}.
		Assume, for some $t_0 \ge 0$, $D(\Theta(t))\leq\pi/2$ for all $t\geq t_0$.
		If $\omega_i > \omega_j$, then $$\theta_i(t) \geq \theta_j(t)$$ for
		\[
			t \geq t_0 + \max \left\{0, \frac{ \theta_j(t_0) - \theta_i(t_0)  } {\omega_i - \omega_j} \right\}.
		\]
\end{lemma}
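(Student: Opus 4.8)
The plan is to track the single scalar quantity $\phi(t) := \theta_i(t) - \theta_j(t)$ and show that it becomes, and then remains, nonnegative within the claimed time. Since the right-hand side of \eqref{eq_max_kuramoto} is a (Lipschitz) continuous function of $\Theta$, each coordinate $\theta_\ell$ is $C^1$, so $\phi$ is $C^1$ and I may work with its ordinary derivative. Subtracting the equations \eqref{eq_max_kuramoto} for the indices $i$ and $j$, the natural-frequency terms contribute $\omega_i - \omega_j > 0$ and the coupling contributes
$$k\sum_{\ell=1}^N \left[\max\{0,\sin(\theta_\ell - \theta_i)\} - \max\{0,\sin(\theta_\ell - \theta_j)\}\right].$$

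The key observation I would isolate first is a monotonicity fact about $g(x) := \max\{0,\sin x\}$: on the interval $[-\pi/2,\pi/2]$ the function $g$ is non-decreasing, because $g\equiv 0$ on $[-\pi/2,0]$ while $g=\sin$ is increasing on $[0,\pi/2]$. This is precisely where the hypothesis $D(\Theta(t))\le \pi/2$ enters and is essential: it guarantees that every argument $\theta_\ell-\theta_i$ and $\theta_\ell-\theta_j$ lies in $[-\pi/2,\pi/2]$, the one range on which $g$ is monotone. On the full circle $\max\{0,\sin\}$ is not monotone, so this step would break down without the $\pi/2$-confinement; this is the main (and really the only) obstacle in the argument.

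With monotonicity in hand, suppose $\phi(t)\le 0$ at some $t\ge t_0$, i.e.\ $\theta_i(t)\le\theta_j(t)$. Then $\theta_\ell-\theta_i\ge\theta_\ell-\theta_j$ for every $\ell$, and monotonicity of $g$ makes each bracket above nonnegative. Hence the coupling sum is $\ge 0$, and I obtain the differential inequality $\dot\phi(t)\ge \omega_i-\omega_j>0$ whenever $\phi(t)\le 0$. Two consequences follow at once. First, once $\phi$ reaches $0$ it can never return to negative values: at any instant with $\phi=0$ one has $\dot\phi\ge\omega_i-\omega_j>0$, so a standard first-crossing argument excludes a downward crossing. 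Second, as long as $\phi$ remains $\le 0$ it grows at least linearly, $\phi(t)\ge \phi(t_0)+(\omega_i-\omega_j)(t-t_0)$.

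Finally I would combine these to pin down the time. If $\phi(t_0)\ge 0$, the stated threshold equals $t_0$ and the first consequence already yields $\phi(t)\ge 0$ for all $t\ge t_0$. If $\phi(t_0)<0$, the linear lower bound forces $\phi$ up to $0$ no later than $t_0+(\theta_j(t_0)-\theta_i(t_0))/(\omega_i-\omega_j)$, and from that crossing time onward the first consequence keeps $\phi\ge 0$. In either case $\theta_i(t)\ge\theta_j(t)$ for $t\ge t_0+\max\{0,(\theta_j(t_0)-\theta_i(t_0))/(\omega_i-\omega_j)\}$, which is exactly the assertion. Once the monotonicity of $g$ on $[-\pi/2,\pi/2]$ is recorded, the remainder is an elementary comparison/integration of the scalar inequality for $\phi$.
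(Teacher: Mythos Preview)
Your proposal is correct and follows essentially the same route as the paper's proof: both establish the differential inequality $\dot\phi(t)\ge\omega_i-\omega_j$ whenever $\phi(t)\le 0$ by using that $x\mapsto\max\{0,\sin x\}$ is non-decreasing on $[-\pi/2,\pi/2]$ (the paper writes this out termwise rather than naming it), then combine a first-crossing argument preventing downward crossings of zero with the linear lower bound to pin down the entry time. The only cosmetic difference is that the paper states the no-return claim for strict positivity ($\phi>0$ stays $>0$) and computes $\dot\phi=\omega_i-\omega_j$ exactly at the crossing, whereas you phrase it for $\phi\ge 0$ using the inequality; the content is the same.
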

\begin{proof}

\textbf{Claim:} If  $\theta_i(t) - \theta_j(t) > 0$ at some moment $t_1 \ge t_0$, then $\theta_i(t) - \theta_j(t) > 0$ for all $t \ge t_1$.

We shall use proof by contradiction to verify the claim.  Suppose the claim does not hold. Let $t= t_2 > t_1$ be the first moment such that $\theta_i(t) = \theta_j(t),$ i.e.,
\begin{equation}
\label{eqFirst}
\theta_i(t) - \theta_j(t) > 0 \,\,  \text{ for }  \,\,  t \in [ t_1, t_2) \,\,  \text{ and } \,\, \theta_i(t_2) - \theta_j(t_2) = 0.
\end{equation}
Inferring from \eqref{eqFirst}, we have 
\begin{equation}
\dot{\theta}_i(t_2) - \dot{\theta}_j(t_2)  \le 0.
\end{equation}
However, taking the difference of the $i$-th and $j$-th equations of \eqref{eq_max_kuramoto} at $t = t_2$, we obtain
$$\dot{\theta}_i(t_2) - \dot{\theta}_j(t_2) = \omega_i - \omega_j >0,$$
which violates \eqref{eqFirst}. This proves the claim.

	Next, if $\theta_i(t) - \theta_j(t) \leq 0$ for some $t\geq t_0$, then
	\[
		\dot{\theta}_j(t) - \dot{\theta}_i(t) = (\omega_j-\omega_i) + k\sum_{\ell=1}^N \Big( \max\{0,\sin(\theta_\ell(t) - \theta_j(t))\} - \max\{0,\sin(\theta_\ell(t) - \theta_i(t))\} \Big).
	\]
	Since
	\[
		\frac{\pi}{2} \geq \theta_\ell(t) - \theta_i(t) \geq \theta_\ell(t) - \theta_j(t) \geq -\frac{\pi}{2},
	\]
	we have
	$$\max\{0,\sin(\theta_\ell(t) - \theta_j(t))\} - \max\{0,\sin(\theta_\ell(t) - \theta_i(t))\} \leq 0.$$
	Hence,
	 $$\dot{\theta}_j(t) - \dot{\theta}_i(t) \leq \omega_j - \omega_i < 0.$$
	This means that the difference of $\theta_j$ and $\theta_i$ decreases at a rate faster than $\omega_i-\omega_j$.
	The lemma follows.
\end{proof}

\begin{lemma}[Well-ordering Lemma]\label{lem_order}
Under the assumptions of Theorem~\ref{thm_freq_sync},	let $\Theta(t)$ be a solution to \eqref{eq_max_kuramoto}
	 with  $D(\Theta(0)) \leq \pi - \delta$. Then there exists $T_*\geq 0$ such that, for all $t > T_*$, we have
	 \begin{align}	
&D(\Theta(t))\leq \delta,  \text{ and }   \\
&\theta_i(t)\geq \theta_j(t)  \text{ if }  \omega_i > \omega_j.
 	 \end{align}
\end{lemma}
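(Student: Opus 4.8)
The plan is to chain together the two preceding lemmas; the ordering condition is essentially a direct consequence of Lemma~\ref{lem_interchange} once sector trapping has confined the phases. First I would invoke the Sector Trapping Lemma (Lemma~\ref{lem_sector_trapping}): the hypotheses of Theorem~\ref{thm_freq_sync} supply $k > D(\Omega)/\sin\delta$, and the initial data satisfies $D(\Theta(0)) \leq \pi - \delta$, so the lemma yields $D(\Theta(t)) \leq \delta$ for all $t \geq T_0$, where $T_0 = (\pi - 2\delta)/(k\sin\delta - D(\Omega))$. This already establishes the first conclusion. Crucially, because $\delta \in (0,\pi/2)$ we have $\delta < \pi/2$, and therefore $D(\Theta(t)) \leq \pi/2$ for all $t \geq T_0$. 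This is precisely the confinement hypothesis required to apply Lemma~\ref{lem_interchange}.

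Next I would apply Lemma~\ref{lem_interchange} with $t_0 = T_0$. For each ordered pair $(i,j)$ with $\omega_i > \omega_j$, the lemma gives $\theta_i(t) \geq \theta_j(t)$ for all
\[
	t \geq T_0 + \max\left\{0, \frac{\theta_j(T_0) - \theta_i(T_0)}{\omega_i - \omega_j}\right\}.
\]
Since $D(\Theta(T_0)) \leq \delta$, the numerator $\theta_j(T_0) - \theta_i(T_0)$ is bounded above by $\delta$, so each of these thresholds is finite. I would then define $T_*$ to be the maximum of $T_0$ and all of these pairwise thresholds. Because the oscillator index set $\{1,\ldots,N\}$ is finite, there are only finitely many pairs with $\omega_i > \omega_j$, so this maximum is attained and finite. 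For every $t > T_*$, both conclusions hold simultaneously: $D(\Theta(t)) \leq \delta$ from the first step, and $\theta_i(t) \geq \theta_j(t)$ whenever $\omega_i > \omega_j$ from the second.

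The argument is essentially bookkeeping in assembling the two lemmas, so I do not anticipate a genuine obstacle. The one point requiring care is verifying that sector trapping produces a diameter confinement \emph{strictly} inside the window of width $\pi/2$ that Lemma~\ref{lem_interchange} demands; this is exactly where the restriction $\delta < \pi/2$ (rather than $\delta \leq \pi/2$) is consumed, since it ensures $\delta \leq \pi/2$ holds with room to spare and the interchange hypothesis $D(\Theta(t)) \leq \pi/2$ is met for all $t \geq T_0$. The finiteness of $T_*$ then follows purely from the finiteness of the index set, and no quantitative estimate beyond the two cited lemmas is needed.
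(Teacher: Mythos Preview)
Your proposal is correct and follows essentially the same approach as the paper: invoke Lemma~\ref{lem_sector_trapping} to obtain $D(\Theta(t))\leq\delta<\pi/2$ for $t\geq T_0$, then apply Lemma~\ref{lem_interchange} with $t_0=T_0$ to every pair $(i,j)$ with $\omega_i>\omega_j$, and take $T_*$ to be the maximum of the resulting pairwise thresholds. The paper's proof is just a terser version of the same two-step argument.
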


\begin{proof}
Let $ T_0 $ be as defined in \eqref{T0}. By Lemma~\ref{lem_sector_trapping}, we see  that $D(\Theta(t))\leq\delta<\pi/2$ for all $t\ge T_0$.
	Applying Lemma~\ref{lem_interchange} to every pair $(\theta_i(t),\theta_j(t))$ with $\omega_i > \omega_j$,  we have $\theta_i(t)\geq\theta_j(t)$ for
	\[
		t\geq  T_* : = T_0 + \max_{(i,j):\omega_i>\omega_j} \max\Bigg\{0,\frac{\theta_j(T_0) - \theta_i(T_0)}{\omega_i - \omega_j}\Bigg\}.\qedhere
	\]
\end{proof}
We are now in a position  to prove Theorem~\ref{thm_freq_sync}.

\begin{proof}[Proof of Theorem~\ref{thm_freq_sync}]
	Assume, among $\Omega$, there are $M$ distinct natural frequencies $$ \omega_1= \omega_{j_1}>\omega_{j_2}>\cdots>\omega_{j_M} = \omega_N.$$
	Without the loss of generality, we may assume $\omega_1=0$.
	Partition $N$ oscillators into $M$ groups $\Theta^{(1)},\ldots,\Theta^{(M)}$, where all oscillators in the group $\Theta^{(m)}$ have the same natural frequency $\omega_{j_m}$.
	Write
	\[
		\Theta^{(m)} = (\theta^{(m)}_1, \ldots, \theta^{(m)}_{n_m} ),\quad m = 1, 2, 3, \cdots, M,
	\]
	and $\sum_{m=1}^M n_m = N$, where $n_m$ is the number of oscillators with natural frequency $\omega_{j_m}$.

	By Lemma~\ref{lem_order}, there exists $T_*>0$ such that for any $t\geq T_*$, $m\in \{1, 2, \cdots, M-1\}$, $i_1 \in \{1,2, \cdots, n_m\}$ and  $i_2 \in \{1,2, \cdots, n_{m+1}\}$, we have
	\begin{align}
		&D(\Theta(t))\leq \delta < \pi / 2, \label{eq_group_bounded}\\
		&\theta^{(m)}_{i_1}(t) \geq \theta^{(m+1)}_{i_2}(t). \label{eq_group_order}
	\end{align}
	Therefore,  for $m\in \{1, 2, \cdots, M\}$ and $i\in \{1, 2, \cdots, n_m\}$ and $t \ge T_*$, \eqref{eq_max_kuramoto} reads
	\begin{equation}
	\label{eq_group_order_kuramoto}
	\begin{aligned}
		\dot{\theta}^{(m)}_{i}(t)
		=& \omega_{j_m} + k\sum_{m'=1}^{m-1}\sum_{\ell=1}^{n_{m'}} \sin(\theta_\ell^{(m')}(t) - \theta_i^{(m)}(t))\\
	&+ k\sum_{\ell=1}^{n_m} \max\Big\{0, \sin(\theta_\ell^{(m)}(t) - \theta_i^{(m)}(t))\Big\},
		\end{aligned}
	\end{equation}
	 In the following, we prove that the frequency of each oscillator in $\Theta^{(m)}$ will converge to $0$ by proceeding a mathematical induction argument on $m$.

	\paragraph{Base case.}
	For $m=1$, \eqref{eq_group_order_kuramoto} reads
	\begin{equation}\label{eq_group1}
	\begin{split}
		\dot{\theta}^{(1)}_{i}(t)
		&= \omega_1 + k\sum_{\ell=1}^{n_1} \Big\{0, \sin(\theta_\ell^{(1)}(t) - \theta_i^{(1)}(t))\Big\}, \quad i =  1, 2, \cdots, n_1.
		\end{split}
	\end{equation}
	Since $k>0$, by Theorem~\ref{thm_phase_sync}, $\Theta^{(1)}(t)$ shall achieve a complete phase synchronization asymptotically.
	By passing limit  $ t \to \infty$ in \eqref{eq_group1}, we conclude that $\smash{\lim_{t\to\infty}\dot{\theta}_i^{(1)}(t) = \omega_1 = 0}$ for $i=1,2, \cdots, n_1$.
	This proves the base case.

\paragraph{Induction Step.}
	Let $2\leq m\leq M$.
	Assume $\lim_{t\to\infty}\dot{\theta}^{(m')}_{\ell}(t) = 0$ for all $1 \le m' \le m-1$ and $1 \le \ell \le n_{m'}$.
	Fix any $\varepsilon>0$.
	By the above induction assumption, there exists $t_1\geq T_*$ such that for all $t\geq t_1$, $1 \le m' \le m-1$ and $1 \le \ell \le n_{m'}$, we have
	\begin{equation}\label{eq_small_group_freq}
		\abs{\dot{\theta}_\ell^{(m')}(t)} \leq \frac{\varepsilon}{2}.
	\end{equation}
	Define
	\[
	r_j(t) = \dot{\theta}_j^{(m)}(t),\ M(t) = \max_{1\le j \le  n_m} r_j(t),\text{ and }m(t) = \min_{1 \le j \le n_m} r_j(t).
	\]
	We shall show that $\lim_{t\to\infty} M(t) = \lim_{t\to\infty} m(t) = 0$, which implies $\lim_{t\to\infty} \dot{\theta}_j^{(m)}(t)=0$ for $j =1, 2, \cdots, n_m$.
	
	Assume $M(s) \geq \varepsilon$ for some $s\geq t_1$.
	Let $i\in I_s \coloneqq \{ 1 \le j \le n_m \mid r_j(s) = M(s)\}$.
	By \eqref{eq_group_order_kuramoto} and Proposition~\ref{prop_max_derivative}, $r_i(t)$ is a continuous function, and its right derivative is given by
	\[
		D_+r_i(s)
		= k\sum_{m'=1}^{m-1}\sum_{\ell=1}^{n_{m'}} \cos(\theta_\ell^{(m')}(s) - \theta_i^{(m)}(t))(\dot{\theta}_\ell^{(m')}(s) - r_i(s))
		+ k\sum_{\ell=1}^{n_m} \Delta_{i\ell}(s),
	\]
	where
	\begin{align*}
		&\Delta_{i\ell}(s) \coloneqq D_+\max\Big\{0, \sin(\theta_\ell^{(m)}(s) - \theta_i^{(m)}(s))\Big\} \\
		&= \begin{cases}
			\cos(\theta_\ell^{(m)}(s) - \theta_i^{(m)}(s))(r_\ell(s) - r_i(s))
			&\qif \sin(\theta_\ell^{(m)}(s) - \theta_i^{(m)}(s)) > 0 \\
			\max\Big\{0, \cos(\theta_\ell^{(m)}(s) - \theta_i^{(m)}(s))(r_\ell(s) - r_i(s)) \Big\}
			&\qif \sin(\theta_\ell^{(m)}(s) - \theta_i^{(m)}(s)) = 0 \\
			0 &\qif \sin(\theta_\ell^{(m)}(s) - \theta_i^{(m)}(s)) < 0.
		\end{cases}
	\end{align*}
Inferring from   \eqref{eq_group_bounded}, we see that $\smash{\cos(\theta_\ell^{(m)}(s) - \theta_i^{(m)}(s)) \ge \cos \delta > 0}$.  On the other hand, by the choice of $i$, we have $r_i(s) = M(s) \geq r_\ell(s)$ for $\ell =1, 2, \cdots, n_m$. We then conclude  $\Delta_{i\ell}(s) \leq 0$.
	By \eqref{eq_small_group_freq} and the assumption $M(s) \ge \varepsilon$, we have
	\[
		-\frac{3\varepsilon}{2} \leq \dot{\theta}_\ell^{(m')}(s) - r_i(s) \leq -\frac{\varepsilon}{2}.
	\]
	Therefore,
	\[
		\begin{split}
		D_+r_i(s)
		&\leq k\sum_{m'=1}^{m-1}\sum_{\ell=1}^{n_{m'}} \cos(\theta_\ell^{(m')}(s) - \theta_i^{(m)}(s))(\dot{\theta}_\ell^{(m')}(s) - r_i(s)) \\
		&\leq -\frac{\varepsilon k}{2}(n_1 + \cdots + n_{m-1})(\cos \delta).
		\end{split}
	\]
	Hence, by Proposition~\ref{prop_max_derivative},
	\[
		D_+M(s)
		= \max_{i\in I_s}D_+r_i(s)
		\leq -\frac{\varepsilon k}{2}(n_1 + \cdots + n_{m-1})(\cos \delta) < 0.
	\]
	This shows that if $M(t)\geq\varepsilon$, then $M(t)$ decays at a rate faster than
	\[
		-\frac{\varepsilon k}{2}(n_1 + \cdots + n_{m-1})(\cos \delta).
	\]
	Therefore, $M(t) \leq \varepsilon$ for $t\gg 0$.
	By a similar argument, one can show that $m(t) \geq -\varepsilon$ for $t\gg 0$.
	Since this holds for any $\varepsilon>0$ and $M(t)\geq m(t)$ for all $t\geq 0$, we have $\lim_{t\to\infty}M(t) = \lim_{t\to\infty}m(t) = 0$.
	This proves the induction step.
	The theorem then follows from the mathematical induction.
\end{proof}


\section{Numerical Results}
In this section, we present numerical results for  \eqref{eq_max_kuramoto} and compare the synchronization behavior of the SC Kuramoto model \eqref{eq_max_kuramoto} with that of the classical Kuramoto model \eqref{eq_kuramoto_classical}.
Note that most authors write $k/N$ for $k$ in the classical Kuramoto model \eqref{eq_kuramoto_classical}.
In order to make the comparison,  we  do not follow this convention.

In this section, all the differential equations were solved numerically by using the \texttt{solve\_ivp()} function in the SciPy package with $10^{-5}$ relative tolerance.
The natural frequencies and the initial phases were first uniformly generated from $[0,1]$ and then scaled to satisfy the given diameter.

\subsection{Identical Oscillators}

\begin{figure}[t]
\centering
\begin{subfigure}{.5\textwidth}
	\centering
	\includegraphics[width=\textwidth]{./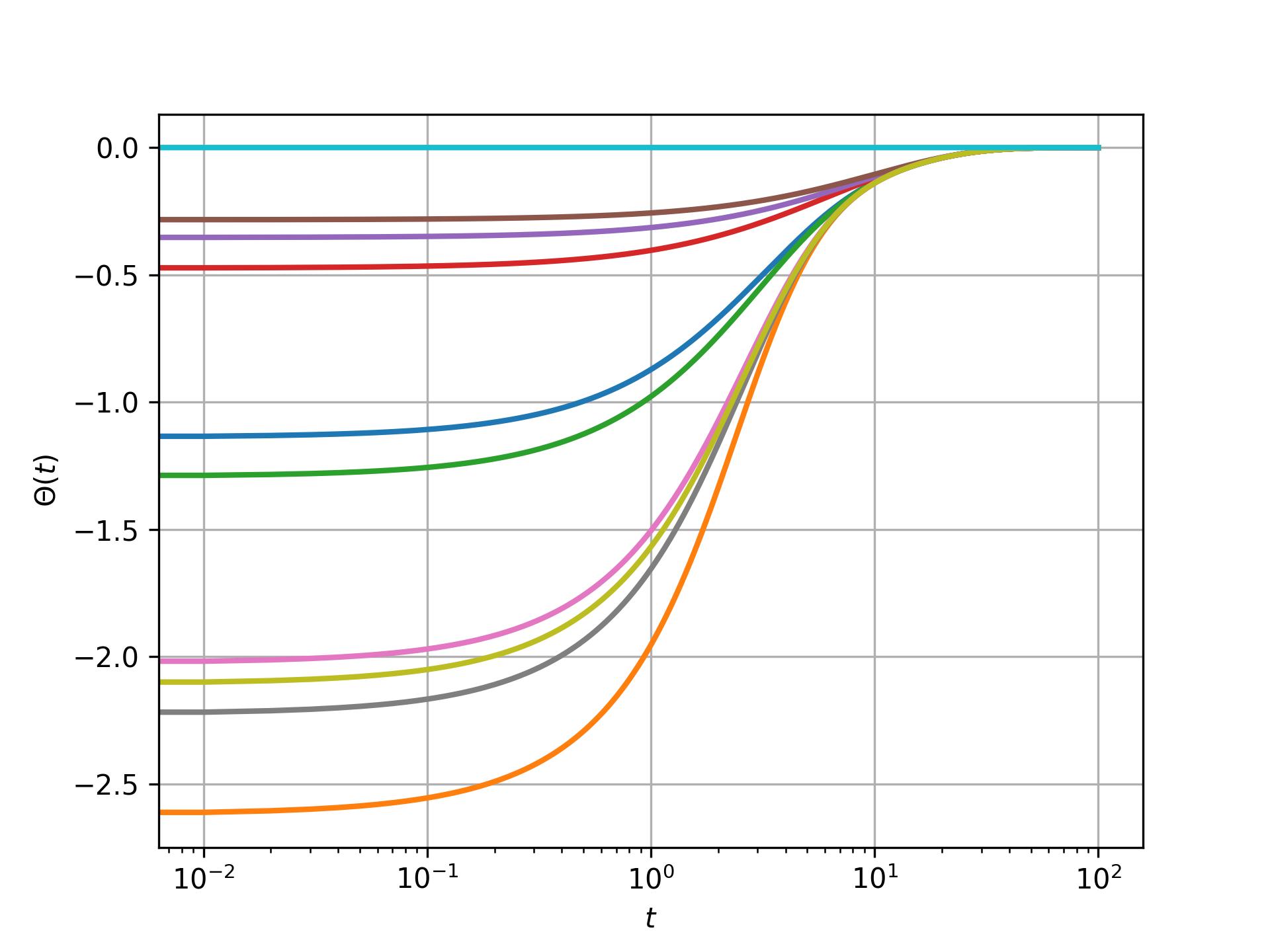}
	\caption{$\Theta(t)$.}
\end{subfigure}%
\hfill
\begin{subfigure}{.5\textwidth}
	\centering
	\includegraphics[width=\textwidth]{./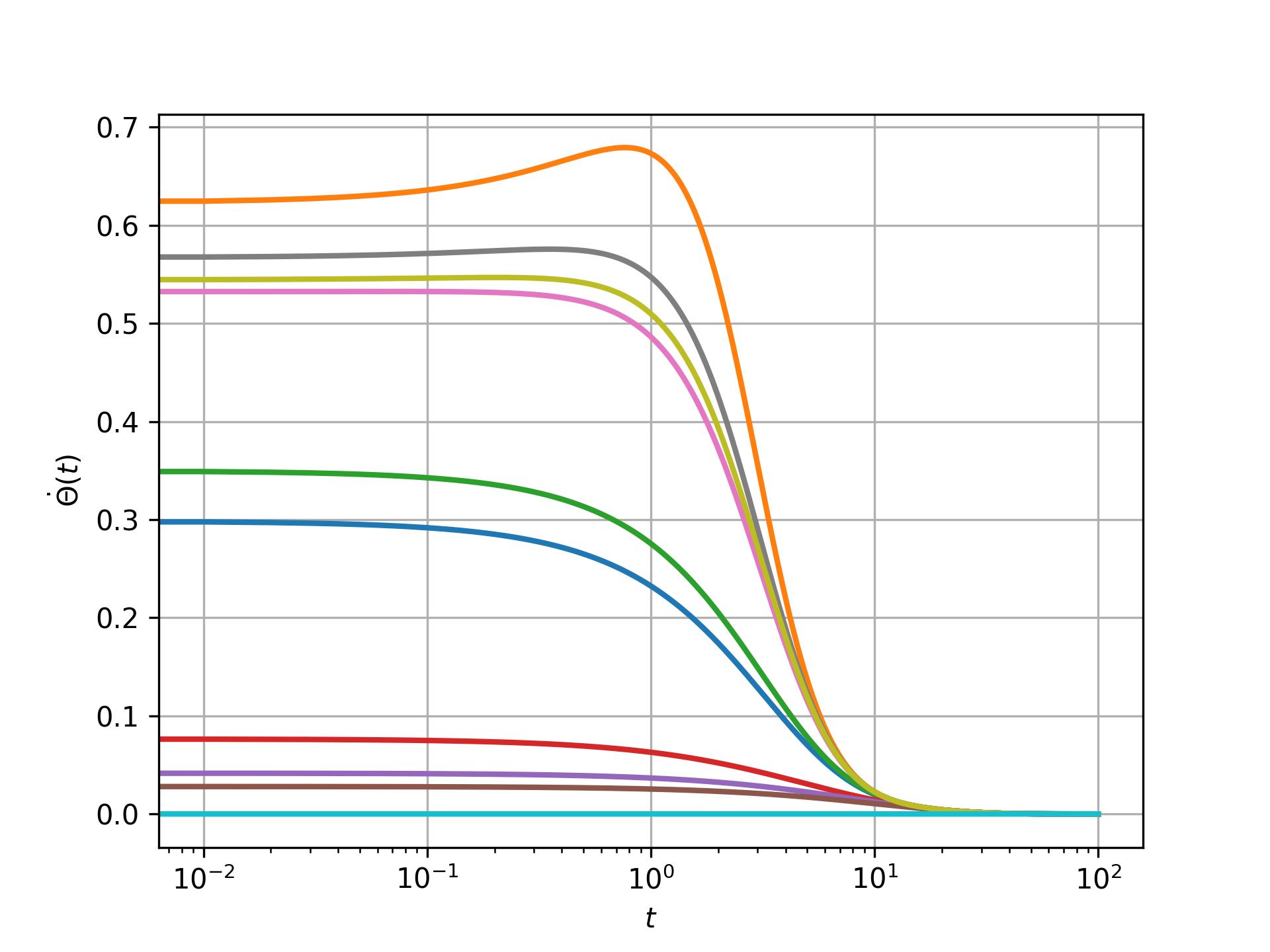}
	\caption{$\dot{\Theta}(t)$.}
\end{subfigure}%

\begin{subfigure}{.5\textwidth}
	\centering
	\includegraphics[width=\textwidth]{./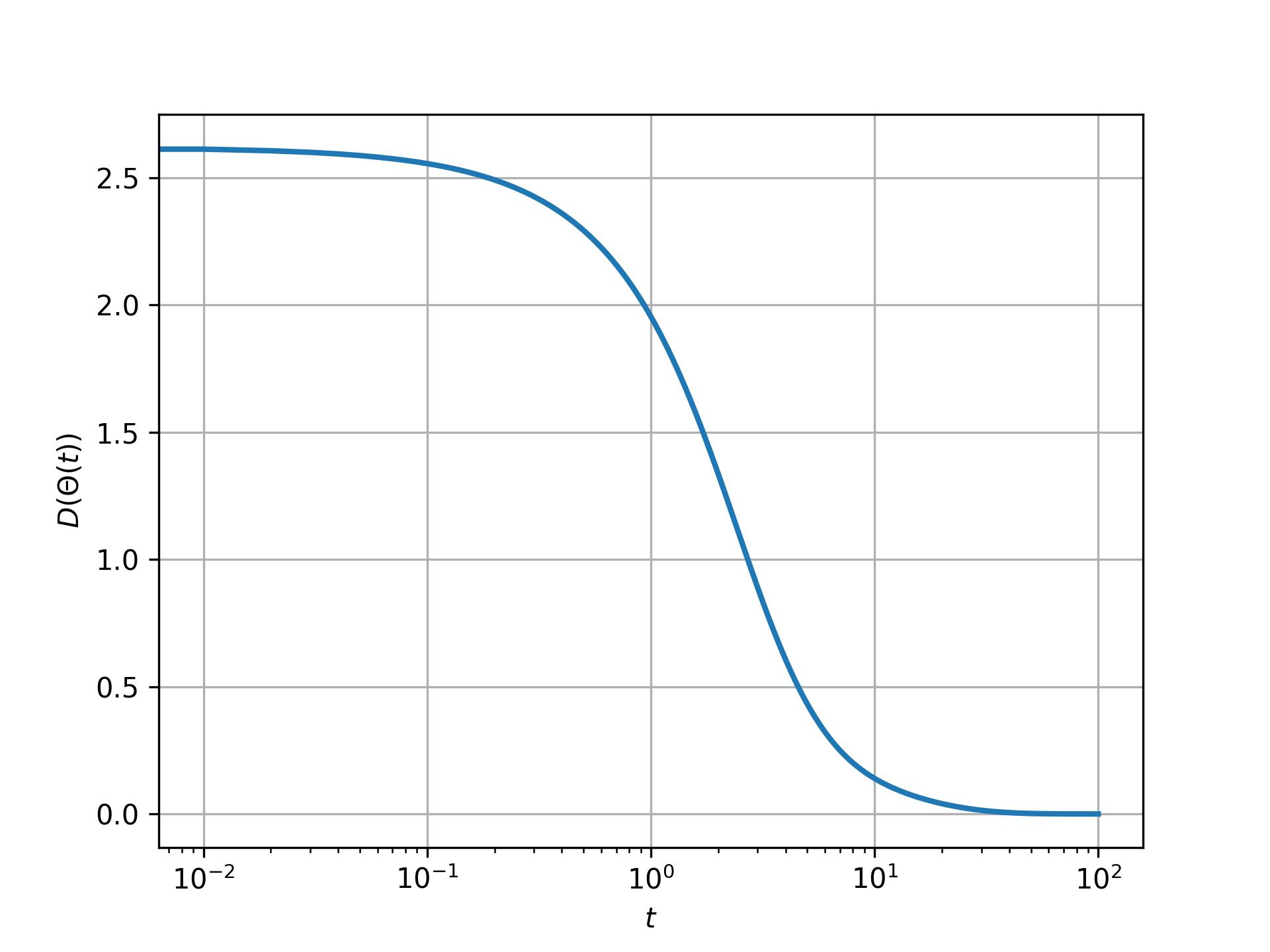}
	\caption{$D(\Theta(t))$}
\end{subfigure}%
\hfill
\begin{subfigure}{.5\textwidth}
	\centering
	\includegraphics[width=\textwidth]{./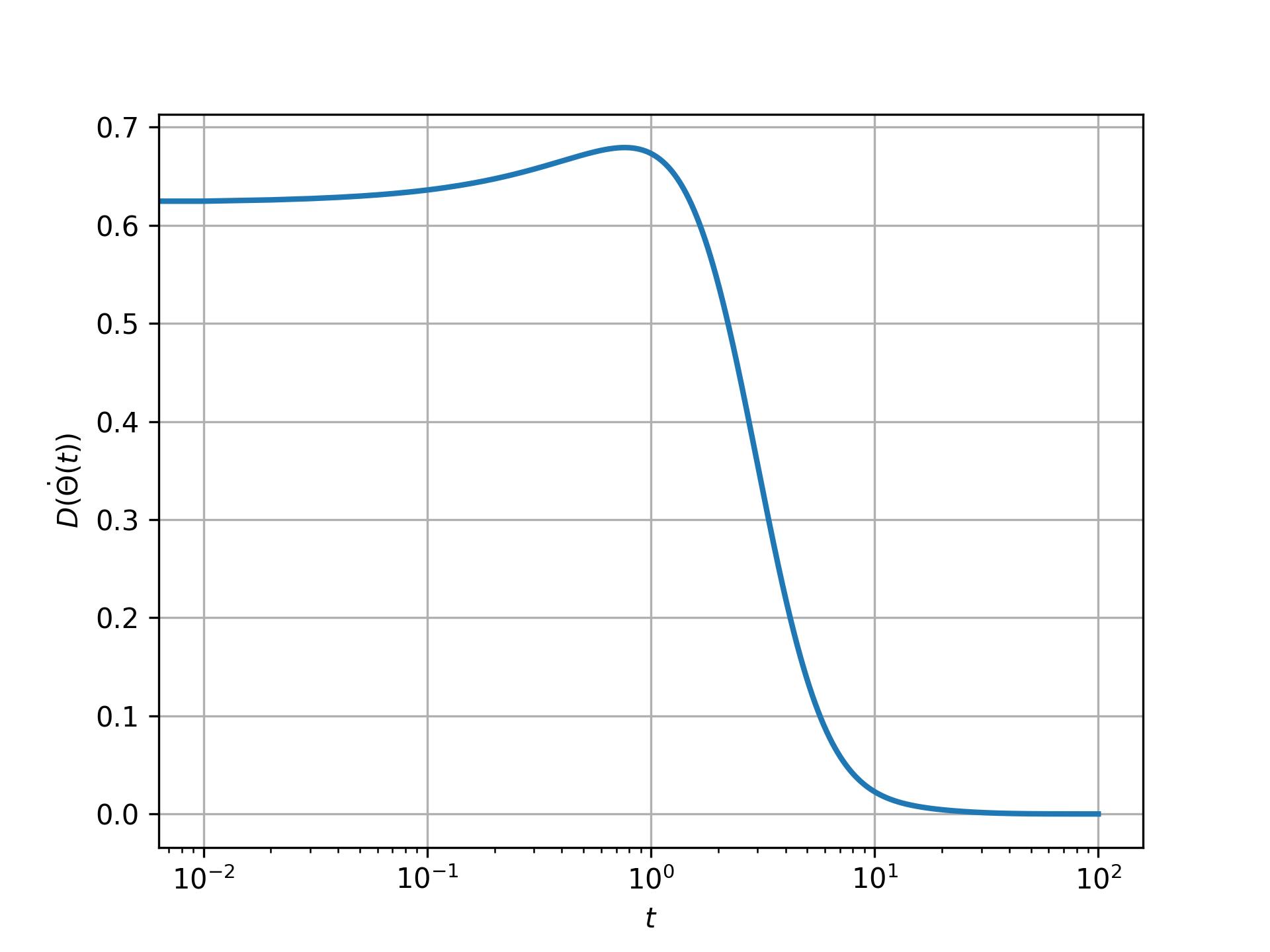}
	\caption{$D(\dot{\Theta}(t))$}
\end{subfigure}%

\caption{Solution of the SC Kuramoto model \eqref{eq_max_kuramoto} with $N=10$, $k=0.1$, $D(\Theta(0))=5\pi/6$, and $D(\Omega)=0$.}
\label{fig:identical}
\end{figure}

In Theorem~\ref{thm_phase_sync}, we have shown that for the SC Kuramoto model \eqref{eq_max_kuramoto}, if all oscillators are identical, $D(\Theta(0)) < \pi$, and $k>0$, then they achieve complete phase synchronization asymptotically.
We demonstrate Theorem~\ref{thm_phase_sync} numerically in Figure~\ref{fig:identical}.
We consider $10$ oscillators and set the coupling strength $k$ to be $0.1$.
Their natural frequencies are $0$, and their initial phases are confined in a $5\pi/6$ arc.
This initial condition satisfies the assumptions of Theorem~\ref{thm_phase_sync}.
As suggested by the theorem, the oscillators achieve complete phase synchronization.

Recall that for the classical Kuramoto model \eqref{eq_kuramoto_classical}, if all oscillators are identical and $k>0$, then the oscillators with $D(\Theta(0)) < \pi$ achieve complete phase synchronization asymptotically.
In Figure~\ref{fig:identical_compare}, we compare the convergence speed of the SC Kuramoto model \eqref{eq_max_kuramoto} with that of the classical Kuramoto model \eqref{eq_kuramoto_classical}.
We observe that in terms of both phase and frequency, the classical Kuramoto oscillators converge faster than the SC Kuramoto oscillators.

\begin{figure}[t]
\centering
\begin{subfigure}{.5\textwidth}
	\centering
	\includegraphics[width=\textwidth]{./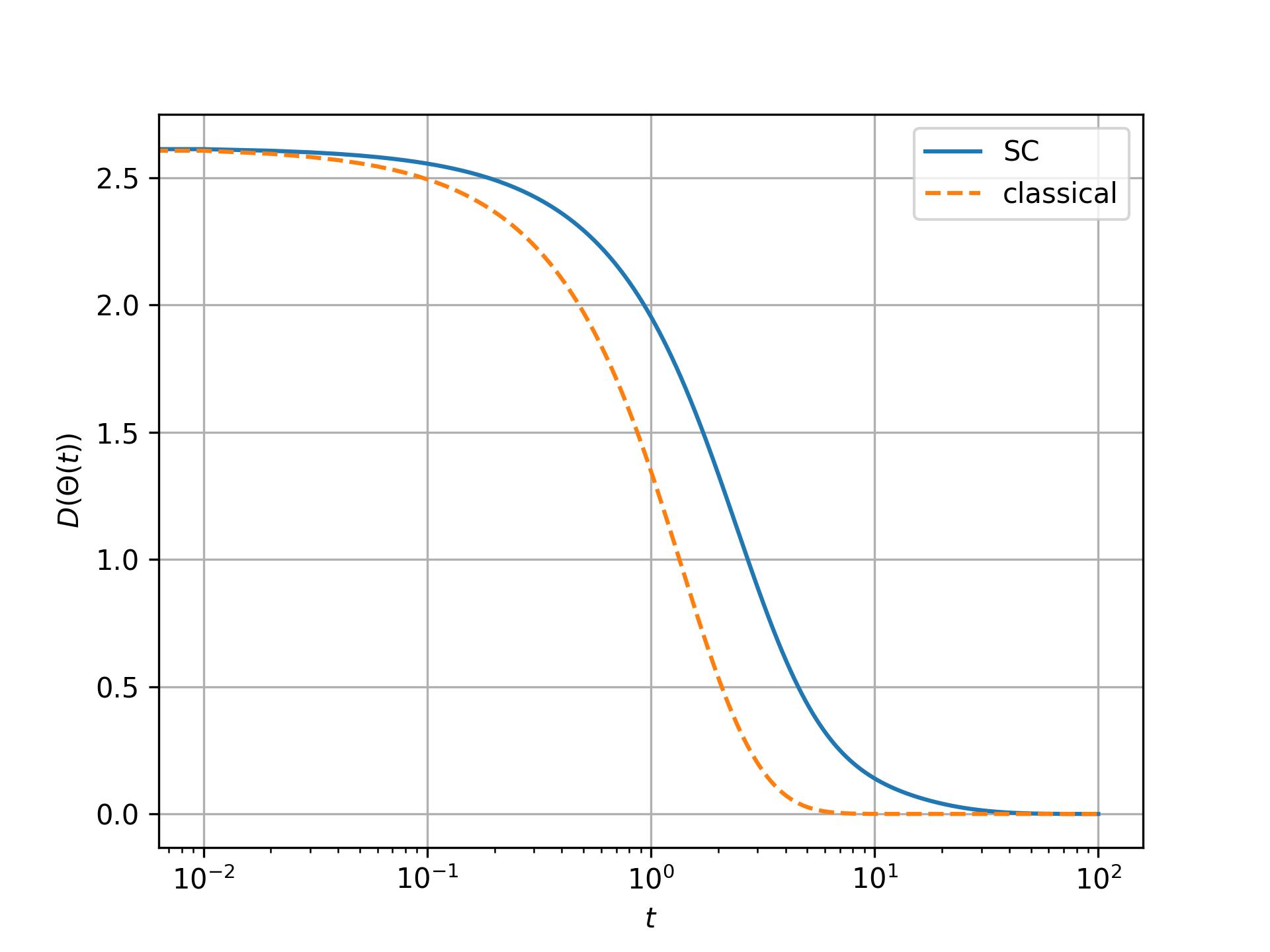}
	\caption{$D(\Theta(t))$.}
\end{subfigure}%
\hfill
\begin{subfigure}{.5\textwidth}
	\centering
	\includegraphics[width=\textwidth]{./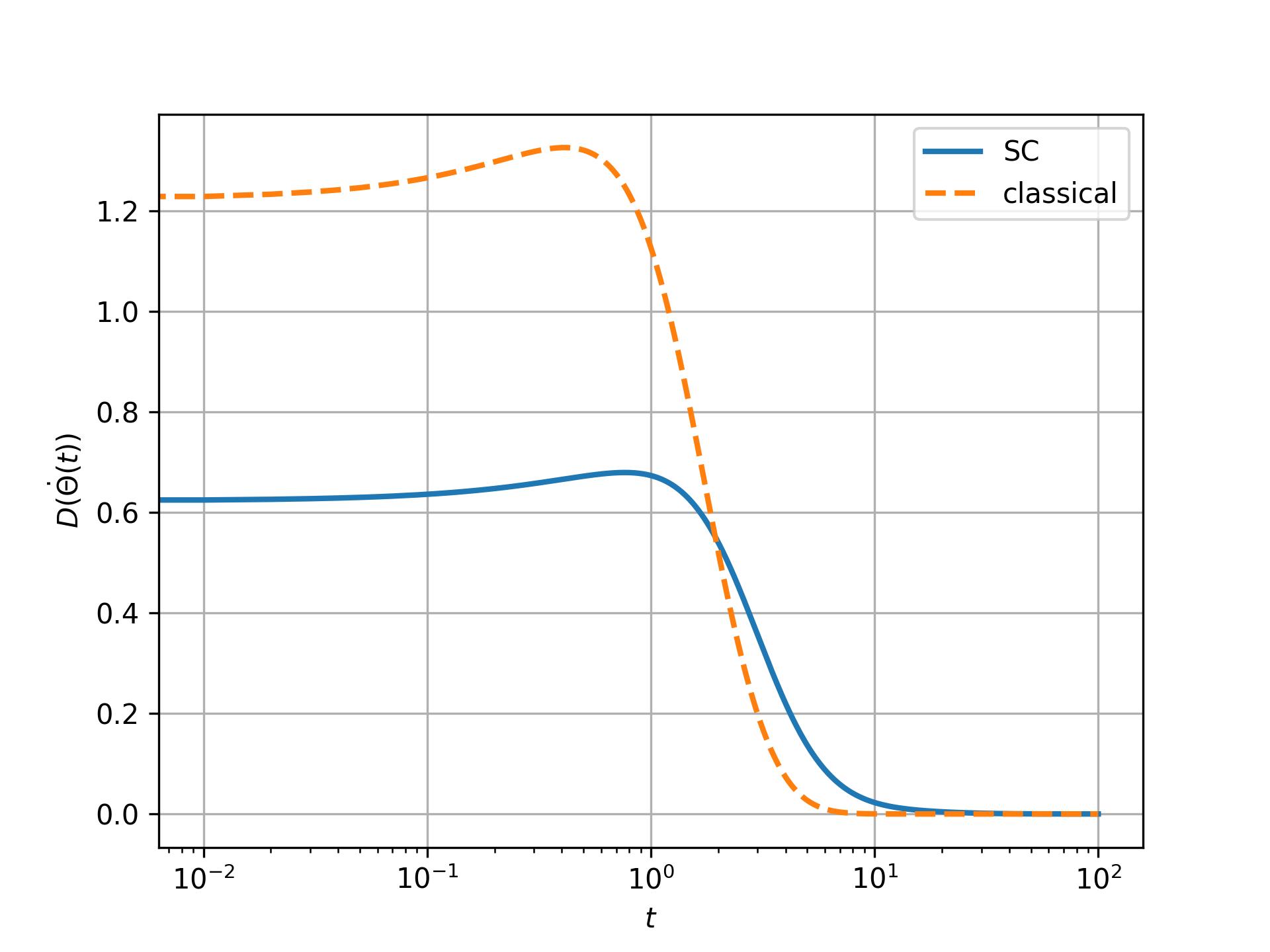}
	\caption{$D(\dot{\Theta}(t))$}
\end{subfigure}%
\caption{Comparison of the convergence speeds of the SC Kuramoto model \eqref{eq_max_kuramoto} and the classical Kuramoto model \eqref{eq_kuramoto_classical}.
We set $N=10$, $k=0.1$, $D(\Theta(0))=5\pi/6$, and $D(\Omega)=0$.}
\label{fig:identical_compare}
\end{figure}

\subsection{Non-identical Oscillators}
In this subsection, we consider non-identical oscillators.
Theorem~\ref{thm_freq_sync} shows that for the SC Kuramoto model \eqref{eq_max_kuramoto}, if   $k>D(\Omega)/\sin\delta$, then the oscillators with $D(\Theta(0)) < \pi-\delta$ achieve a complete frequency synchronization asymptotically.
In Figure~\ref{fig:nonidentical}, we consider $10$ oscillators with $D(\Omega) = 1$ and $\max\Omega=0$.
We set the coupling strength $k$ to be $D(\Omega)/\sin(\pi/6)+10^{-3}$.
Their initial phases are confined in a $5\pi/6-10^{-3}$ arc.
This initial condition satisfies the assumptions of Theorem~\ref{thm_freq_sync}.
We observe that the oscillators achieve a complete frequency synchronization asymptotically and  the synchronized frequency equals the largest natural frequency, which is consistent with Theorem~\ref{thm_freq_sync}.

\begin{figure}[t]
\centering
\begin{subfigure}{.5\textwidth}
	\centering
	\includegraphics[width=\textwidth]{./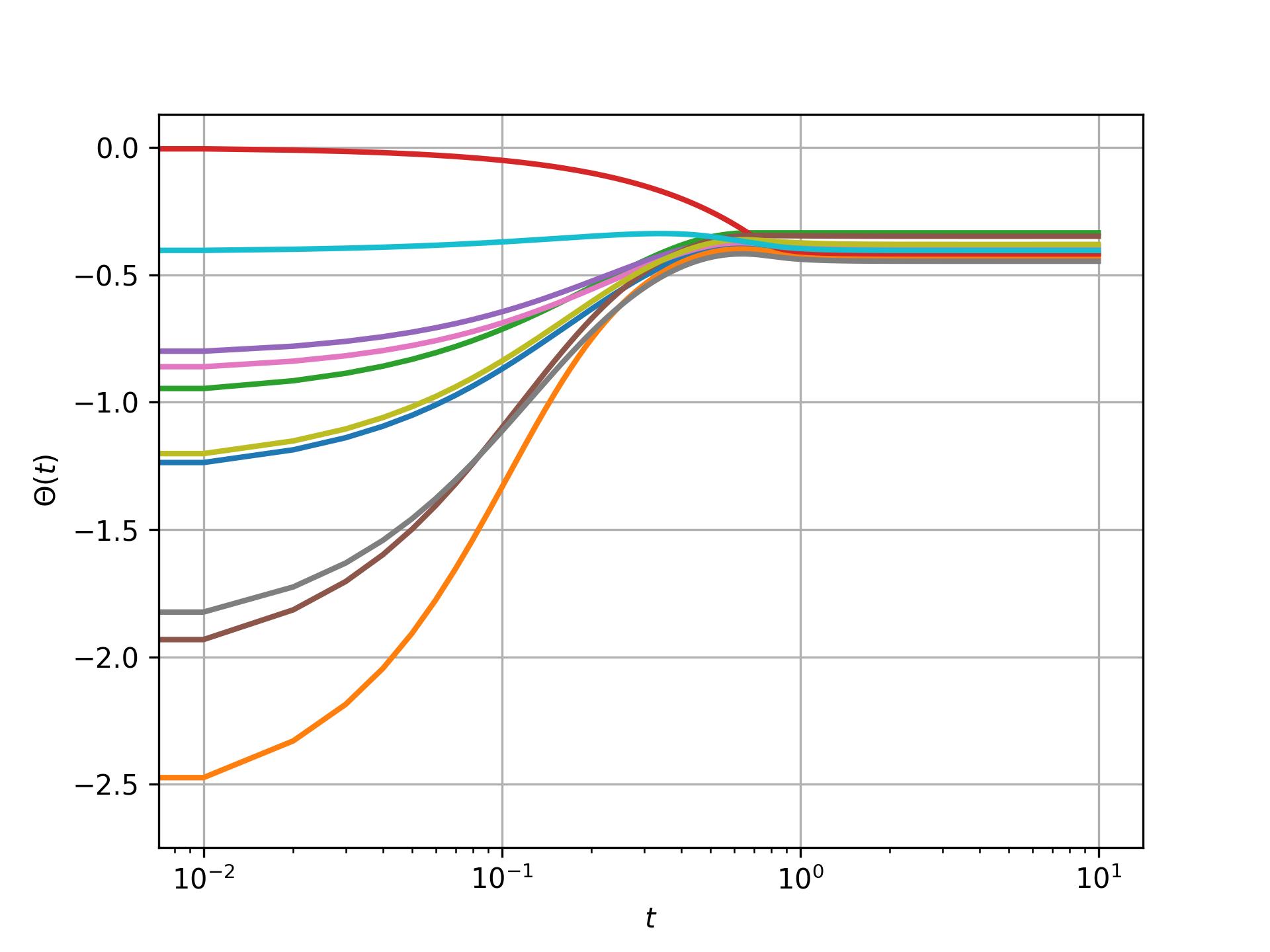}
	\caption{$\Theta(t)$.}
\end{subfigure}%
\hfill
\begin{subfigure}{.5\textwidth}
	\centering
	\includegraphics[width=\textwidth]{./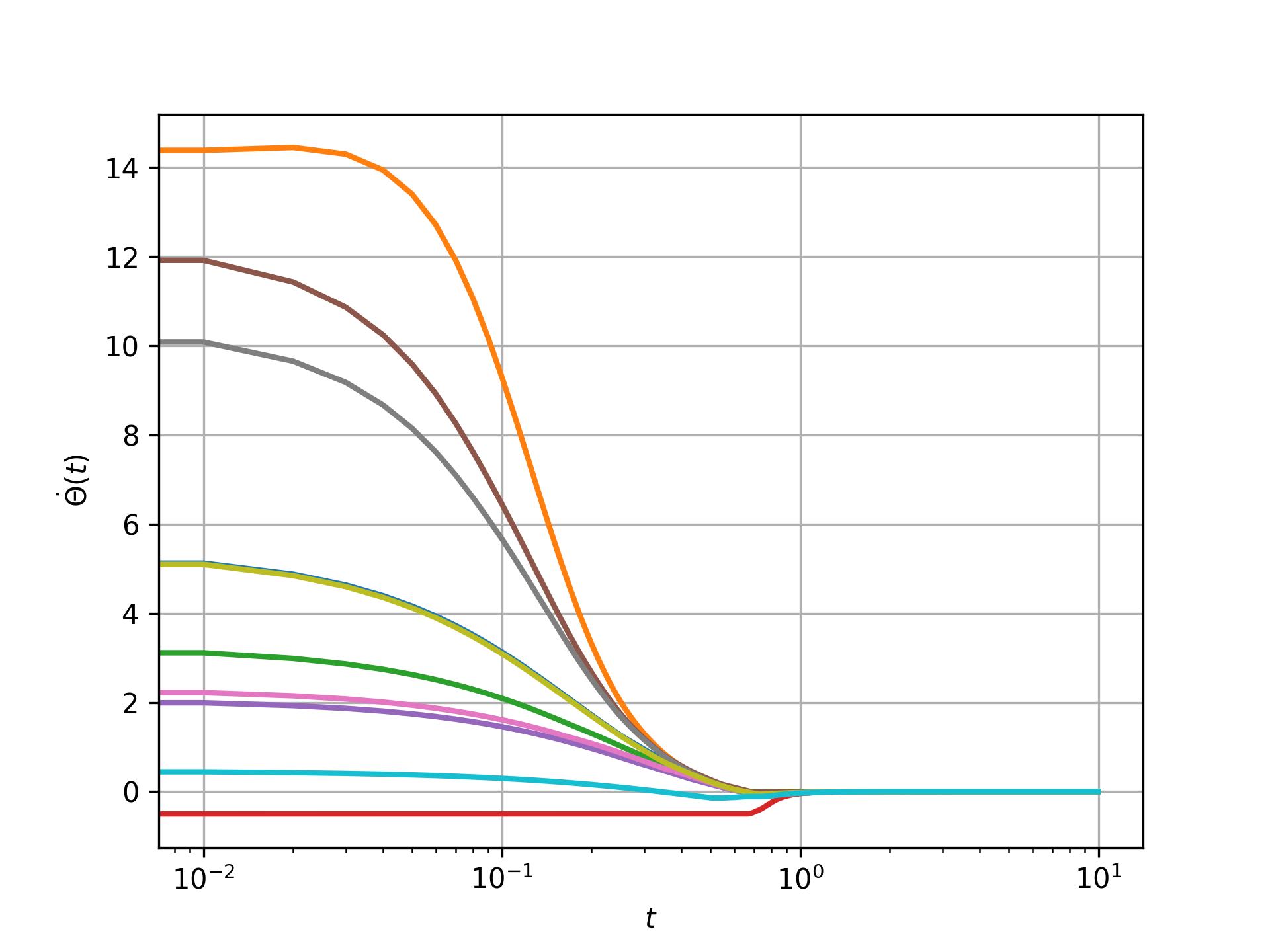}
	\caption{$\dot{\Theta}(t)$.}
\end{subfigure}%

\begin{subfigure}{.5\textwidth}
	\centering
	\includegraphics[width=\textwidth]{./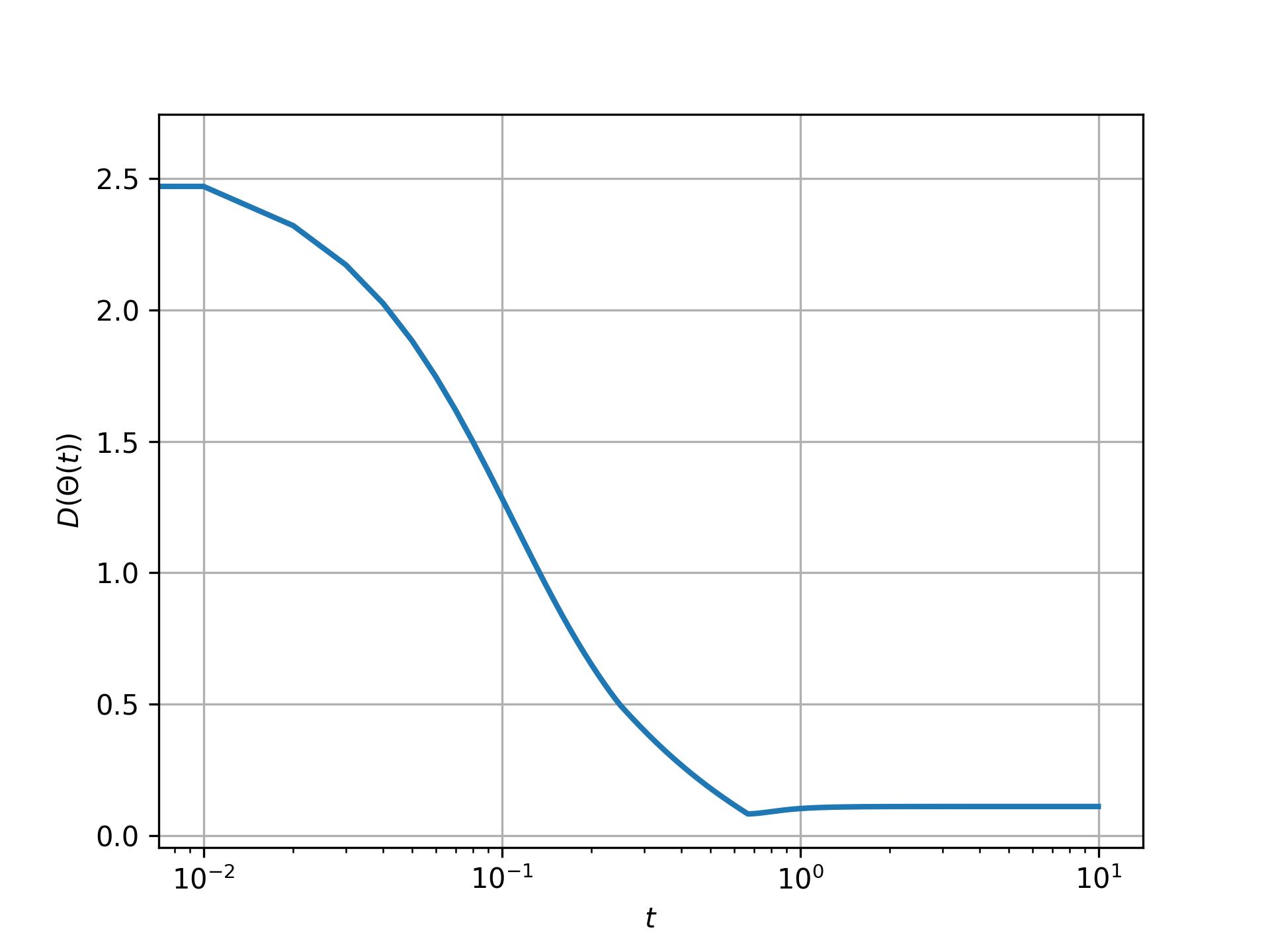}
	\caption{$D(\Theta(t))$}
\end{subfigure}%
\hfill
\begin{subfigure}{.5\textwidth}
	\centering
	\includegraphics[width=\textwidth]{./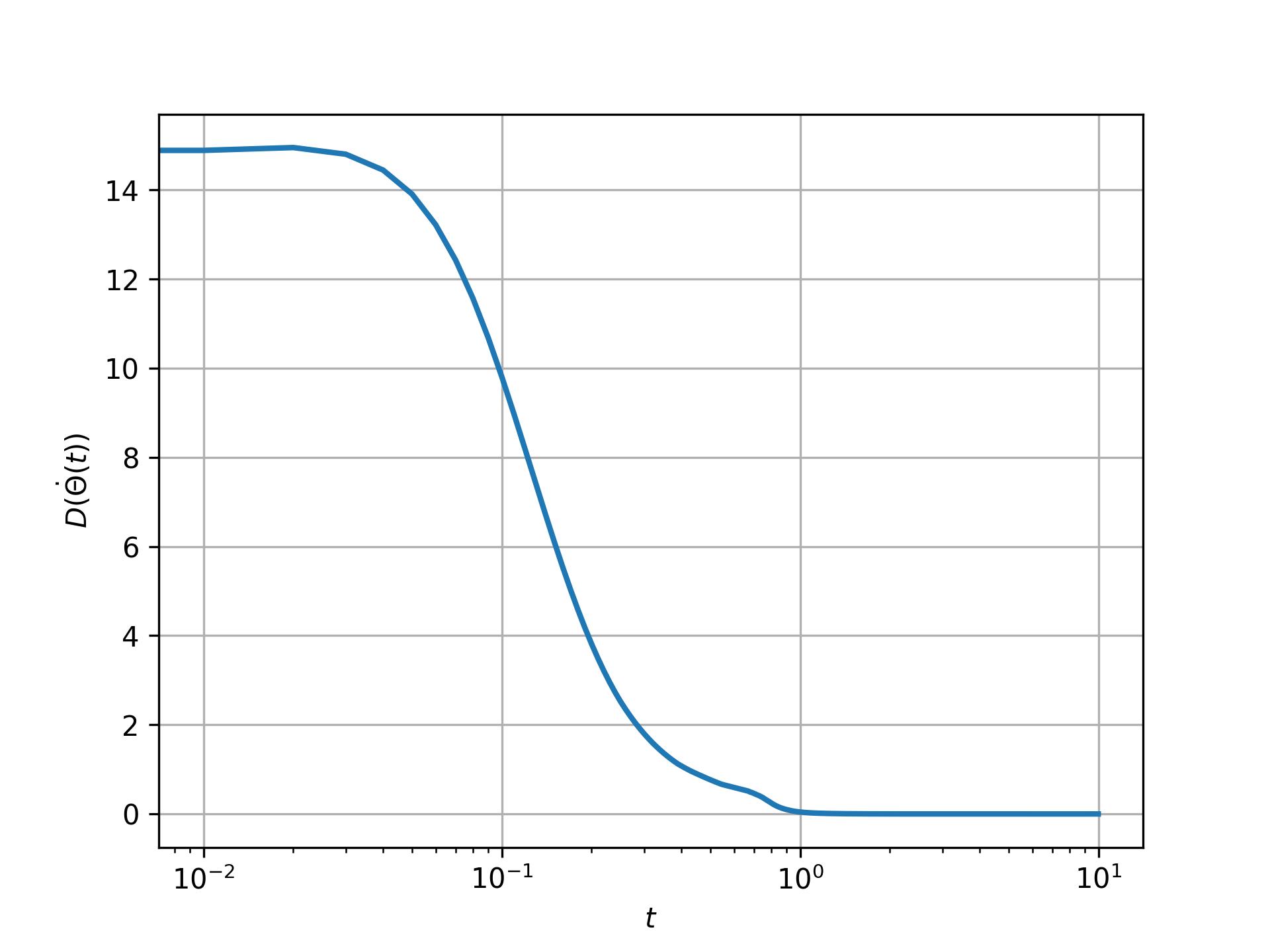}
	\caption{$D(\dot{\Theta}(t))$}
\end{subfigure}%

\caption{Solution of the SC Kuramoto model \eqref{eq_max_kuramoto} with $N=10$, $k= D(\Omega)/\sin(\pi/6)+10^{-3} \approx 1.967$, $D(\Theta(0)) = 5\pi/6 - 10^{-3} \approx 2.617$, $D(\Omega)=1$, and $\max\Omega=0$.}
\label{fig:nonidentical}
\end{figure}

Note that under the  assumption   $k> D(\Omega)/(N\sin\delta)$, the condition  $D(\Theta(0))<\pi-\delta$ suffices to ensure complete frequency synchronization for the classical Kuramoto model \eqref{eq_kuramoto_classical}.
 Since $D(\Omega)/\sin\delta > D(\Omega)/(N\sin\delta),$ this hints that the SC Kuramoto model may be harder to achieve frequency synchronization than the classical Kuramoto  model.
In Figure~\ref{fig:nonidentical_compare}, we compare the convergence speed of the SC Kuramoto model \eqref{eq_max_kuramoto} with that of the classical Kuramoto model \eqref{eq_kuramoto_classical} when both models are guaranteed to achieve complete frequency synchronization.
We observe that the classical Kuramoto oscillators achieve complete frequency synchronization faster than the SC Kuramoto oscillators.

\begin{figure}[H]
\centering
\begin{subfigure}{.5\textwidth}
	\centering
	\includegraphics[width=\textwidth]{./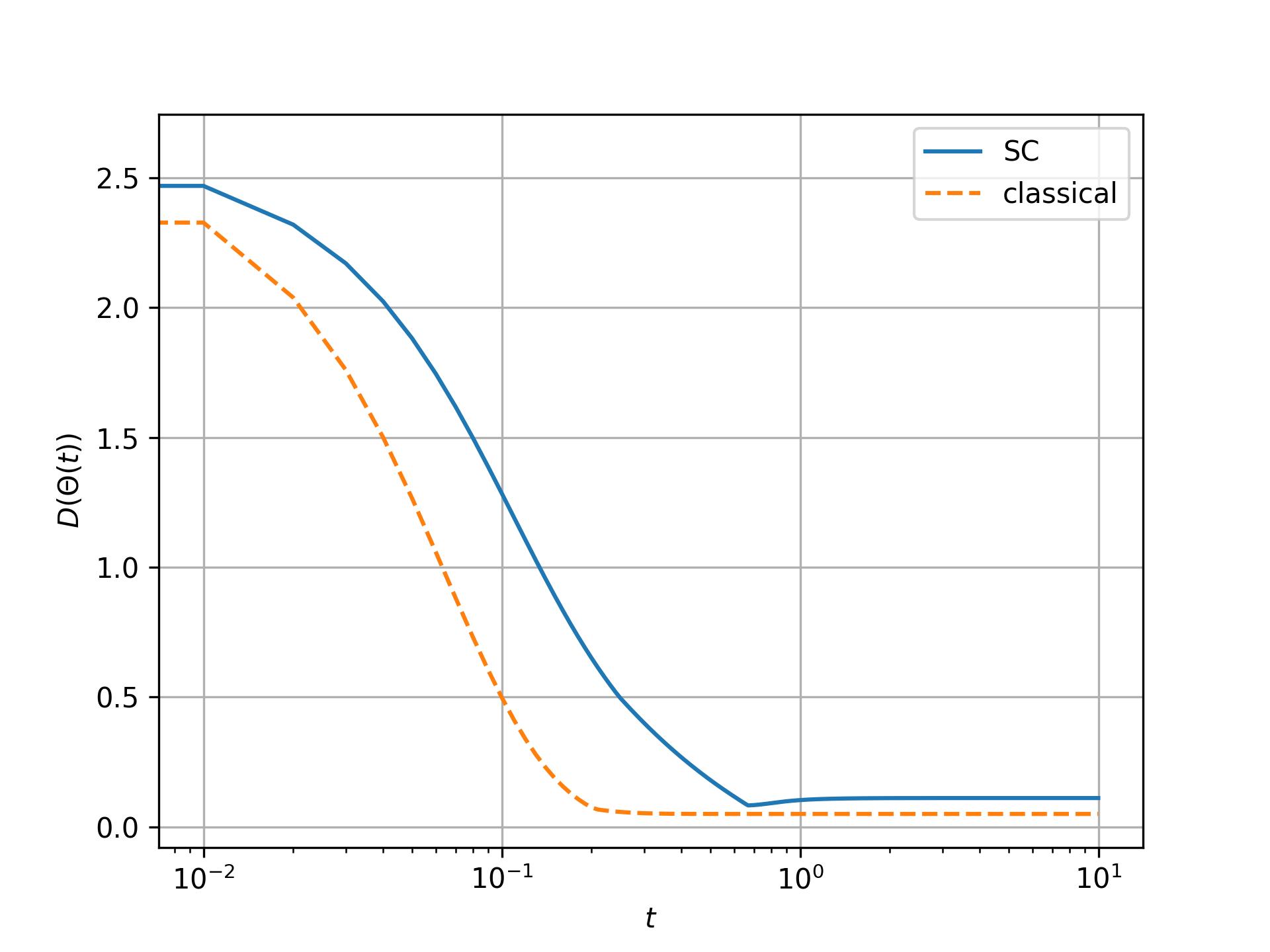}
	\caption{$D(\Theta(t))$.}
\end{subfigure}%
\hfill
\begin{subfigure}{.5\textwidth}
	\centering
	\includegraphics[width=\textwidth]{./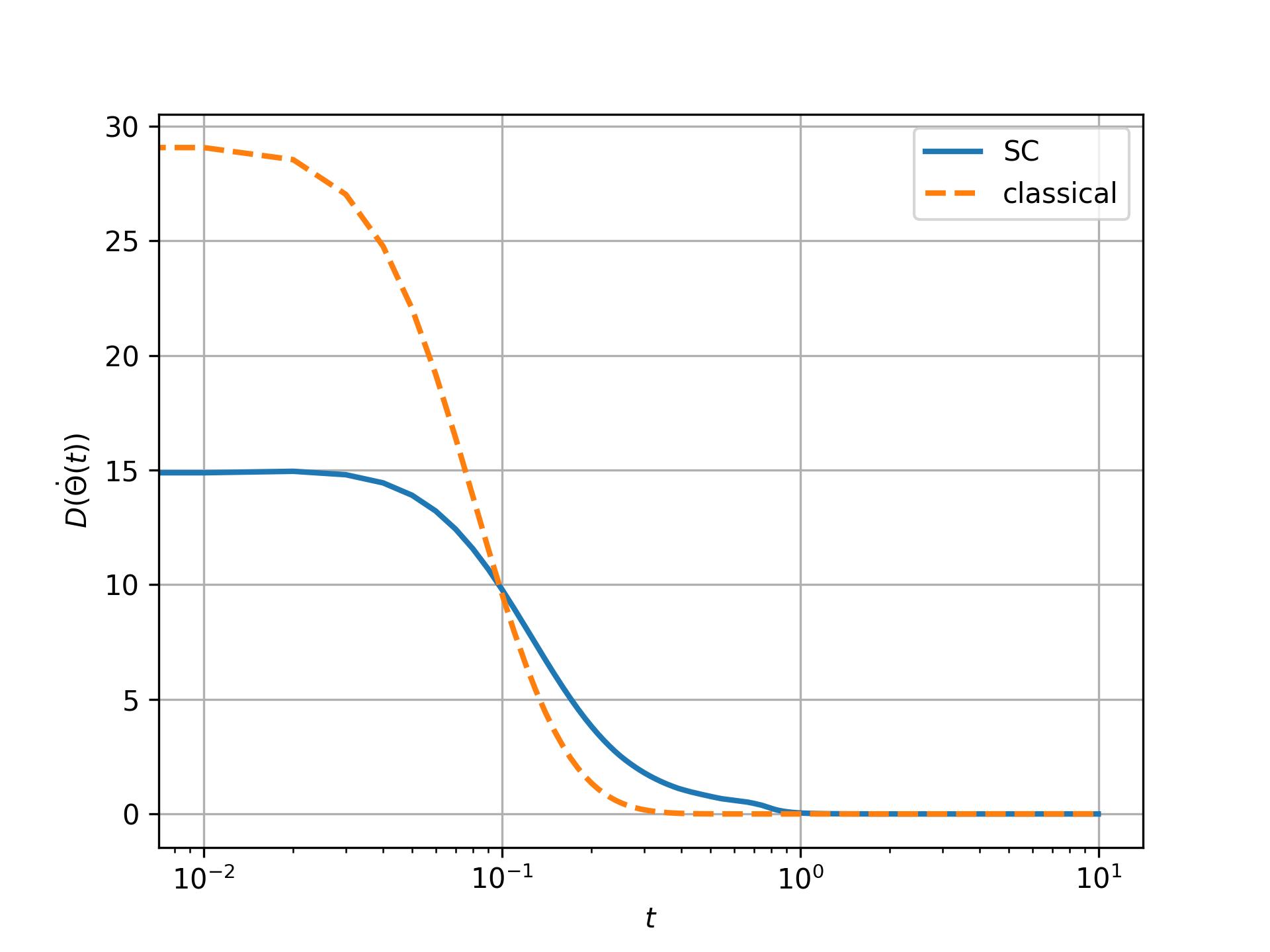}
	\caption{$D(\dot{\Theta}(t))$}
\end{subfigure}%
\caption{Comparison of the convergence speeds of the SC Kuramoto model \eqref{eq_max_kuramoto} and the classical Kuramoto model \eqref{eq_kuramoto_classical}.
We set $N=10$, $k= D(\Omega)/\sin(\pi/6)+10^{-3} \approx 1.967$, $D(\Theta(0)) = 5\pi/6 - 10^{-3} \approx 2.617$, $D(\Omega)=1$, and $\max\Omega=0$.}
\label{fig:nonidentical_compare}
\end{figure}

\begin{figure}[H]
\centering
\begin{subfigure}{.5\textwidth}
	\centering
	\includegraphics[width=\textwidth]{./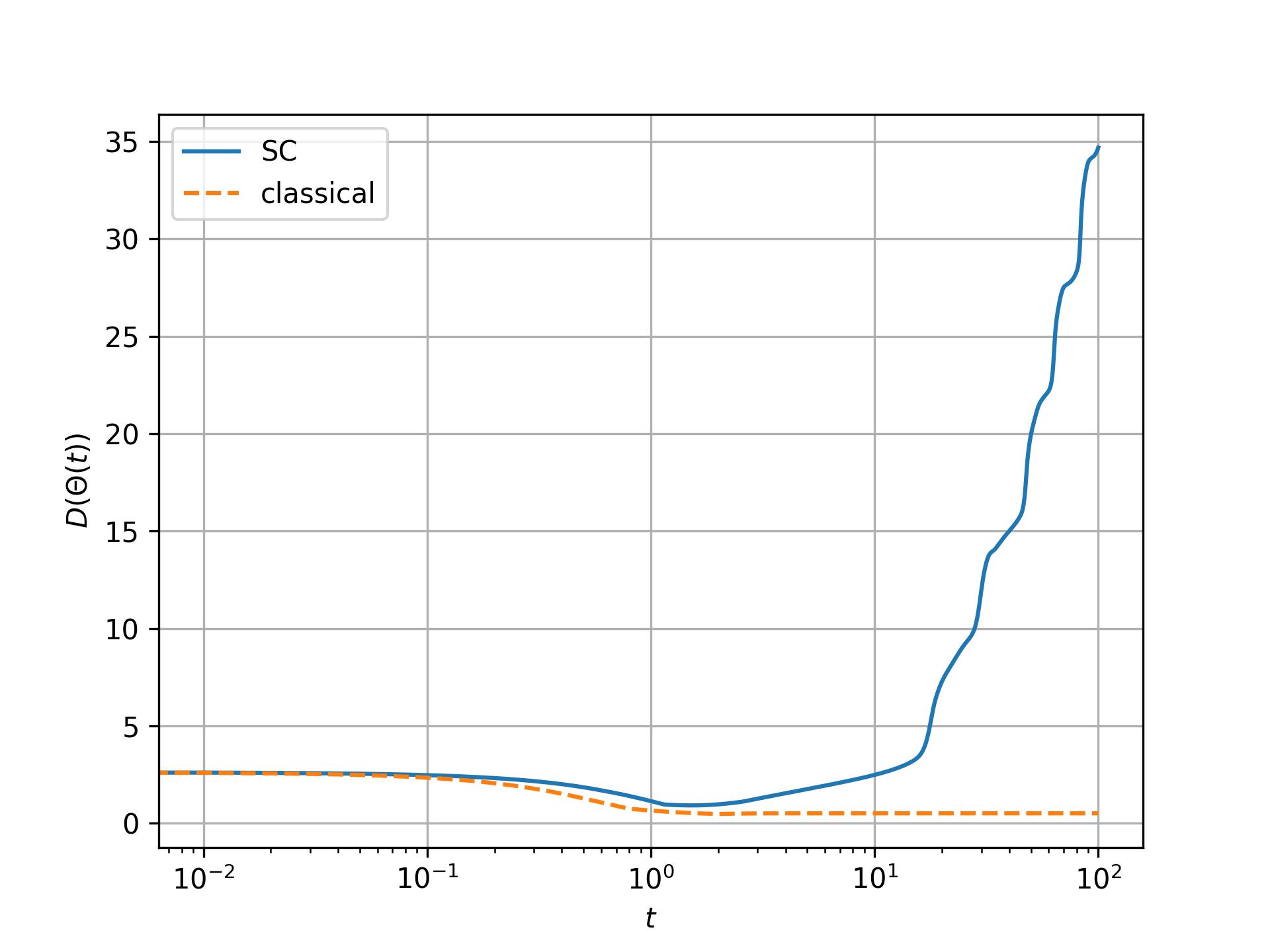}
	\caption{$D(\Theta(t))$.}
\end{subfigure}%
\hfill
\begin{subfigure}{.5\textwidth}
	\centering
	\includegraphics[width=\textwidth]{./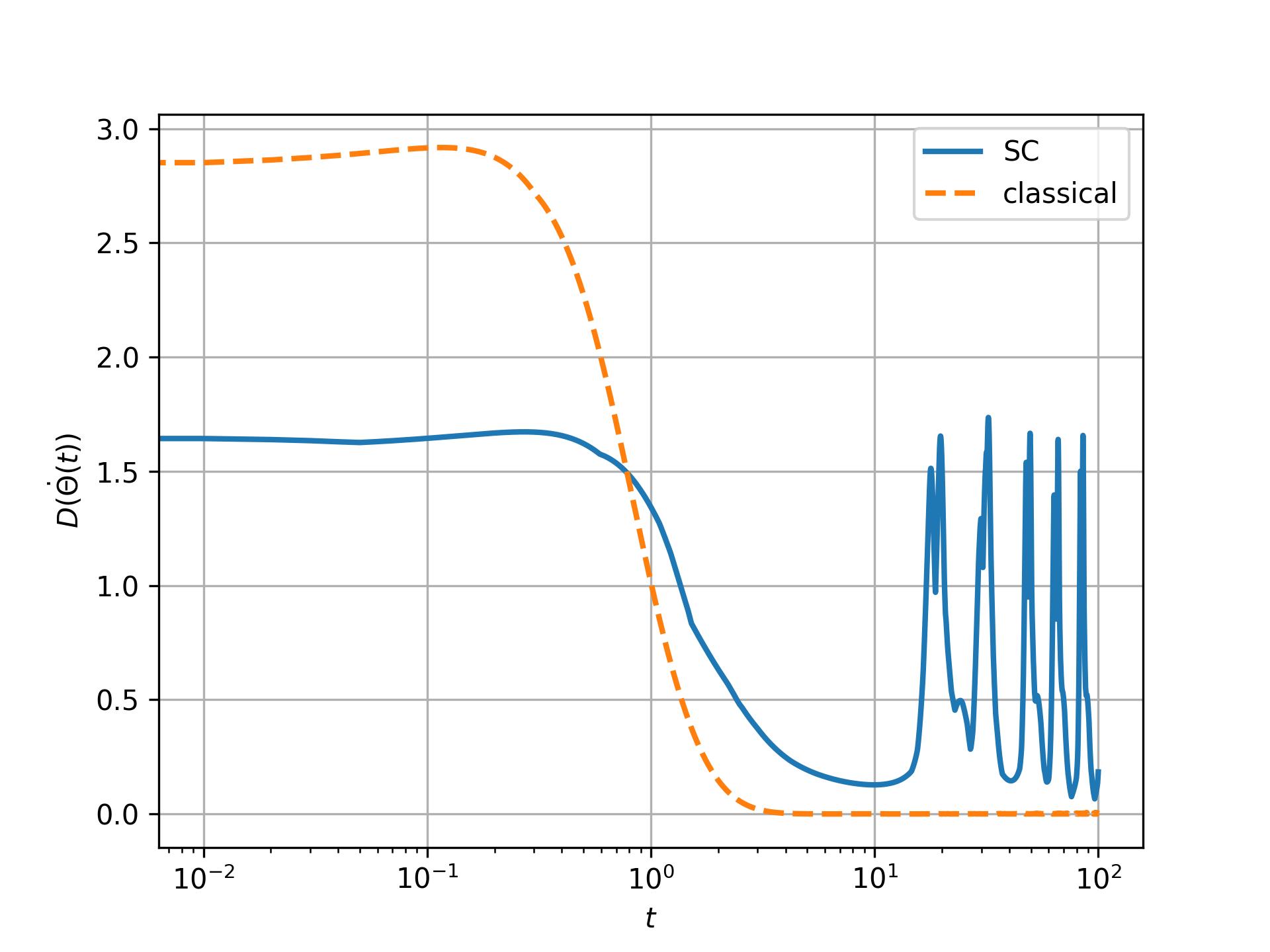}
	\caption{$D(\dot{\Theta}(t))$}
\end{subfigure}%
\caption{Comparison of the SC Kuramoto model \eqref{eq_max_kuramoto} and the classical Kuramoto model \eqref{eq_kuramoto_classical}.
We set $N=10$, $k=0.2$, $D(\Theta(0)) = 5\pi/6 - 10^{-3} \approx 2.617$, $D(\Omega)=1$, and $\max\Omega=0$.}
\label{fig:nonidentical_diverge}
\end{figure}

In Figure~\ref{fig:nonidentical_diverge}, we set $k = 0.2$ with other parameters unchanged.
Since $k > D(\Omega)/(N\sin\delta)$, the standard model is guaranteed to achieve complete frequency synchronization.
However, since $k < D(\Omega)/\sin\delta$, our theorem does not apply.
We observe that under this setup, the classical Kuramoto oscillators achieve complete frequency synchronization asymptotically, but the SC Kuramoto oscillators do not.
This confirms that, compared with  the classical Kuramoto oscillators,  it is harder for SC Kuramoto oscillators to synchronize.

\subsection{Numerical Results beyond Our Theorems}\label{sec4.3}
In this subsection, we provide numerical results that cannot be inferred from our theoretical results.
Firstly, we consider identical oscillators with $D(\Theta(0))$ larger than $\pi$.
Specifically, we set $N=10$, $k=1$, $D(\Theta(0)) = 15\pi/8>\pi$, and $D(\Omega)=0$.
Figure~\ref{fig:identical_more_than_pi} presents the numerical results.
Note that the oscillators achieve both complete phase and frequency synchronization.
Also, the solution satisfies $\lim_{t\to\infty}D(\Theta(t))=2\pi$ instead of $\lim_{t\to\infty}D(\Theta(t))=0$.

\begin{figure}[t]
\centering
\begin{subfigure}{.5\textwidth}
	\centering
	\includegraphics[width=\textwidth]{./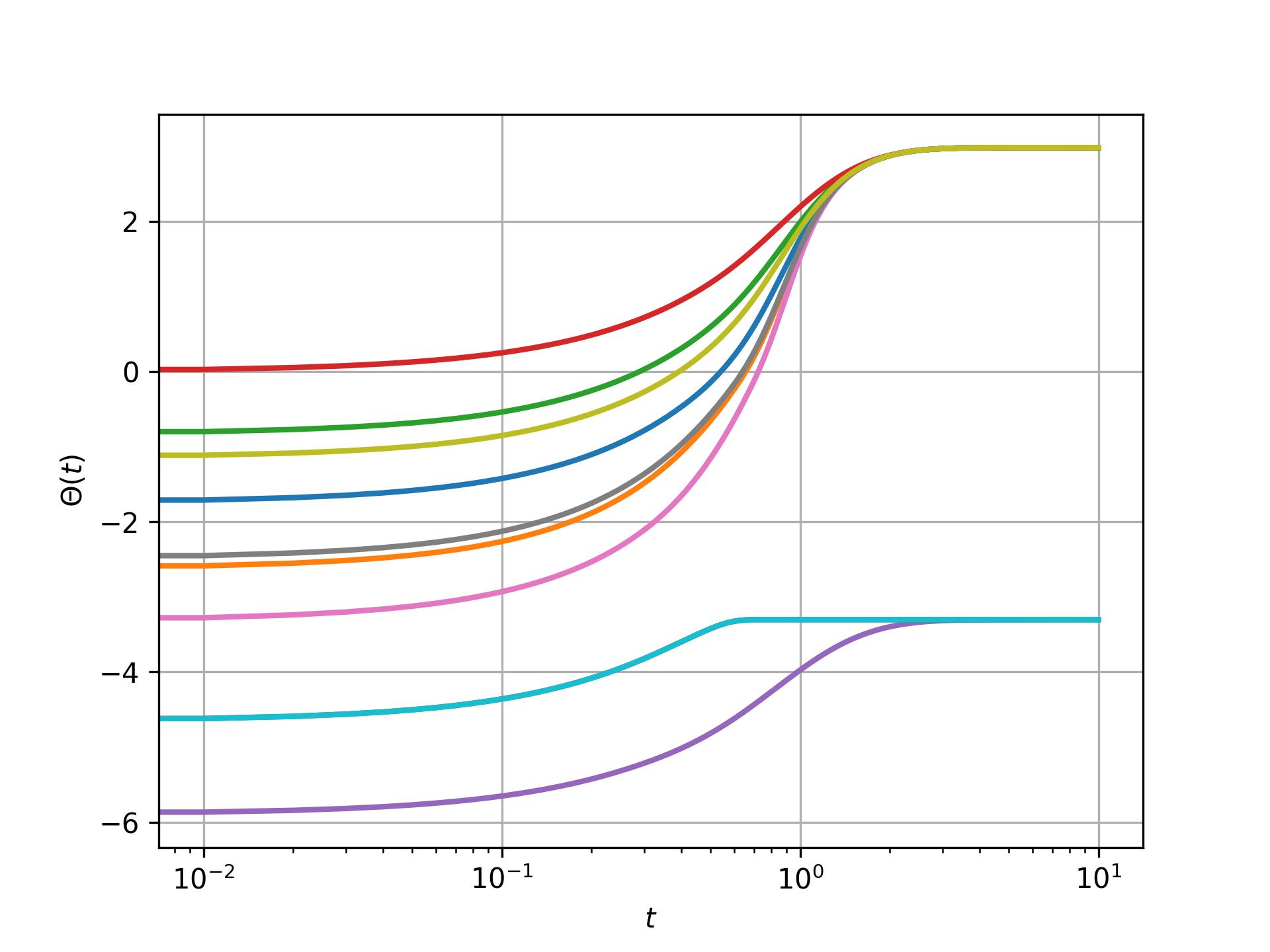}
	\caption{$\Theta(t)$.}
\end{subfigure}%
\hfill
\begin{subfigure}{.5\textwidth}
	\centering
	\includegraphics[width=\textwidth]{./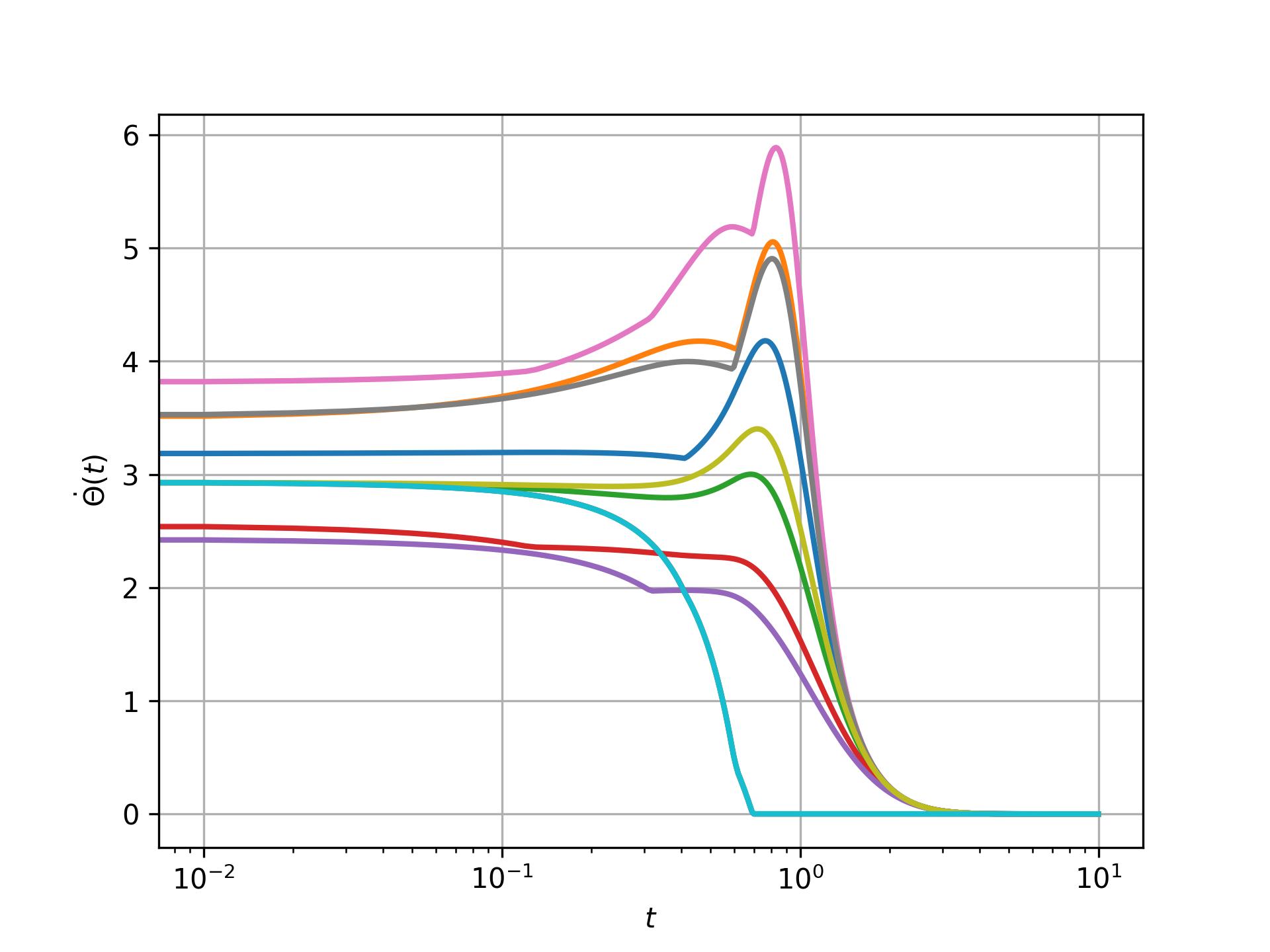}
	\caption{$\dot{\Theta}(t)$.}
\end{subfigure}%

\begin{subfigure}{.5\textwidth}
	\centering
	\includegraphics[width=\textwidth]{./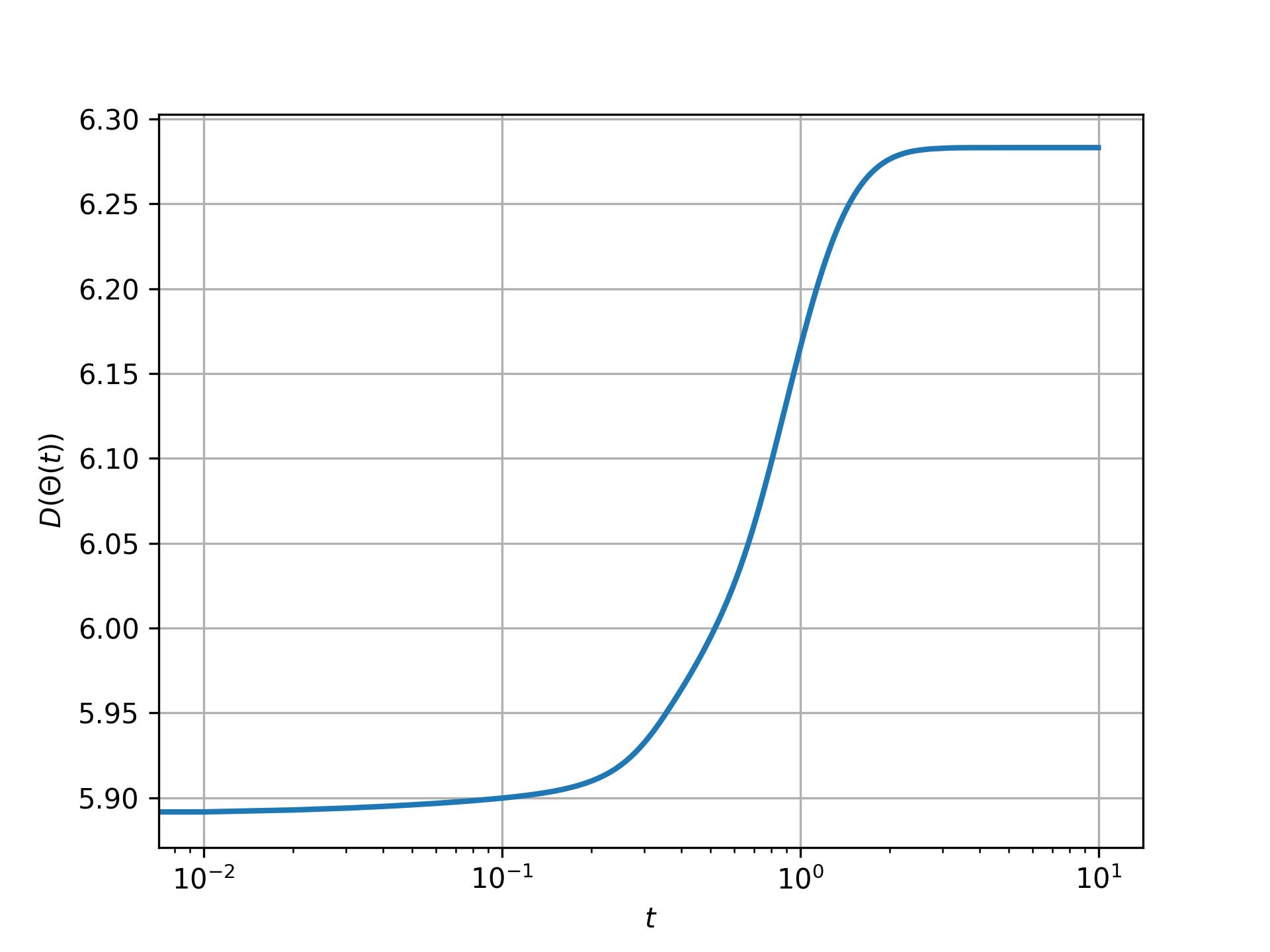}
	\caption{$D(\Theta(t))$}
\end{subfigure}%
\hfill
\begin{subfigure}{.5\textwidth}
	\centering
	\includegraphics[width=\textwidth]{./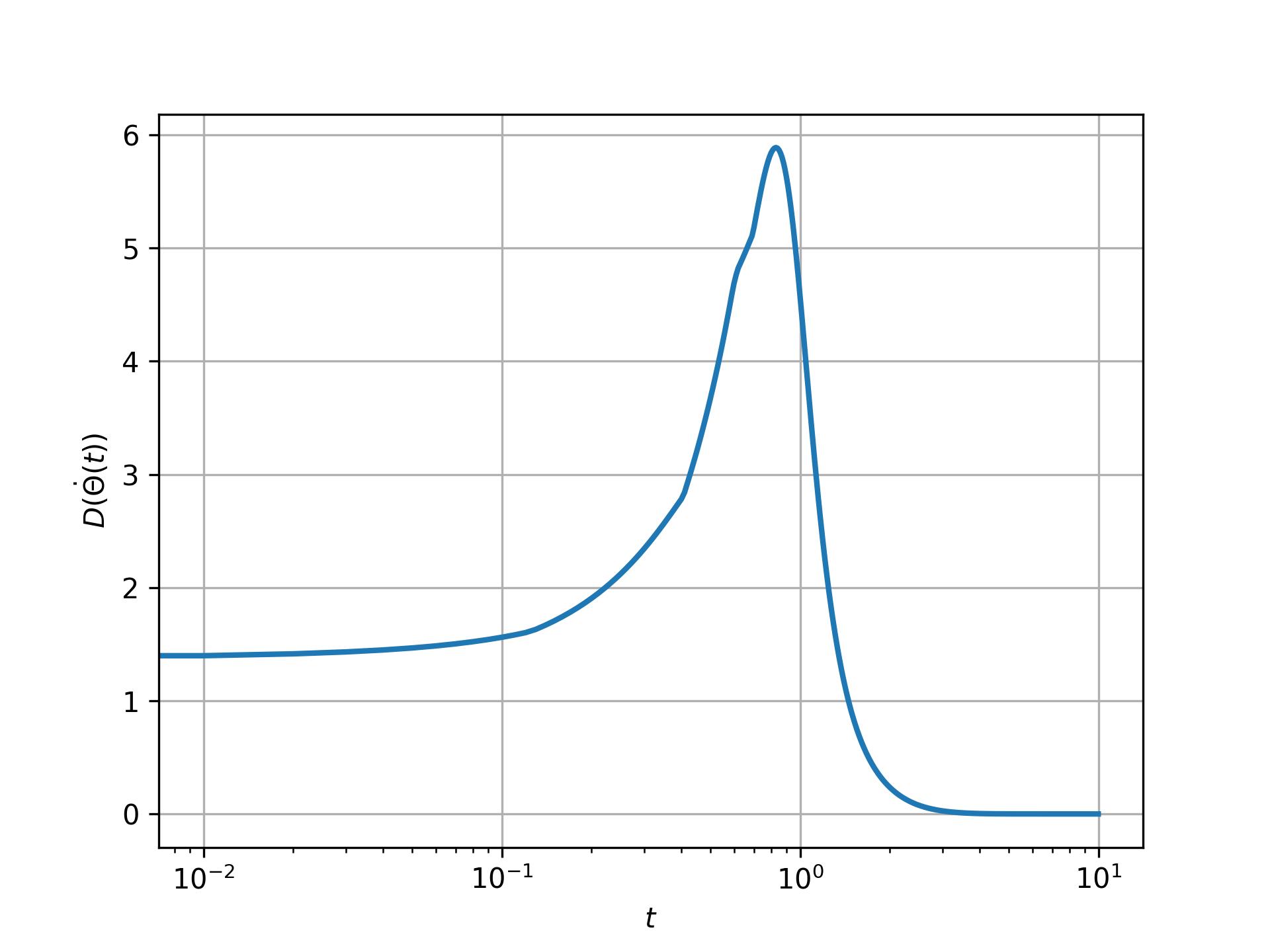}
	\caption{$D(\dot{\Theta}(t))$}
\end{subfigure}%

\caption{Solution of the SC Kuramoto model \eqref{eq_max_kuramoto} with $N=10$, $k=1$, $D(\Theta(0))=15\pi/8$ and $D(\Omega)=0$.}
\label{fig:identical_more_than_pi}
\end{figure}

Secondly, we show that the synchronized frequency can be larger than the largest natural frequency.
For example, suppose $N$ identical oscillators are uniformly distributed on the circle.
That is, $\omega_i = \omega$ and $\theta_i(0) = 2i\pi/N$ for all $1\leq i\leq N$.
Then, it can be checked that
\[
	\dot{\theta}_i(t) = \omega + k \sum_{1\leq j<N/2} \sin\frac{2j\pi}{N} > \omega, \quad \text{ for } i =1, 2, 3, \cdots, N.
\]
In Figure~\ref{fig:identical_not_max}, we consider $N=6$, $k=0.1$, and $\omega=0$.
The result confirms that the synchronized frequency is $2k\sin(\pi/3)\approx 0.1732$, larger than the maximal natural frequency $0$.

Thirdly, we consider non-identical oscillators with $k < D(\Omega)/\sin\delta$.
In Figure~\ref{fig:nonidentical_small_k}, we set $N=10$, $k=0.5$, $D(\Theta(0))=5\pi/6$, and $D(\Omega)=1$.
Theorem~\ref{thm_freq_sync} requires $k>D(\Omega)/\sin\delta = 2$, so the theorem does not 
apply.
We observe that the oscillators still exhibit complete frequency synchronization.

\begin{figure}[H]
\centering
\begin{subfigure}{.5\textwidth}
	\centering
	\includegraphics[width=\textwidth]{./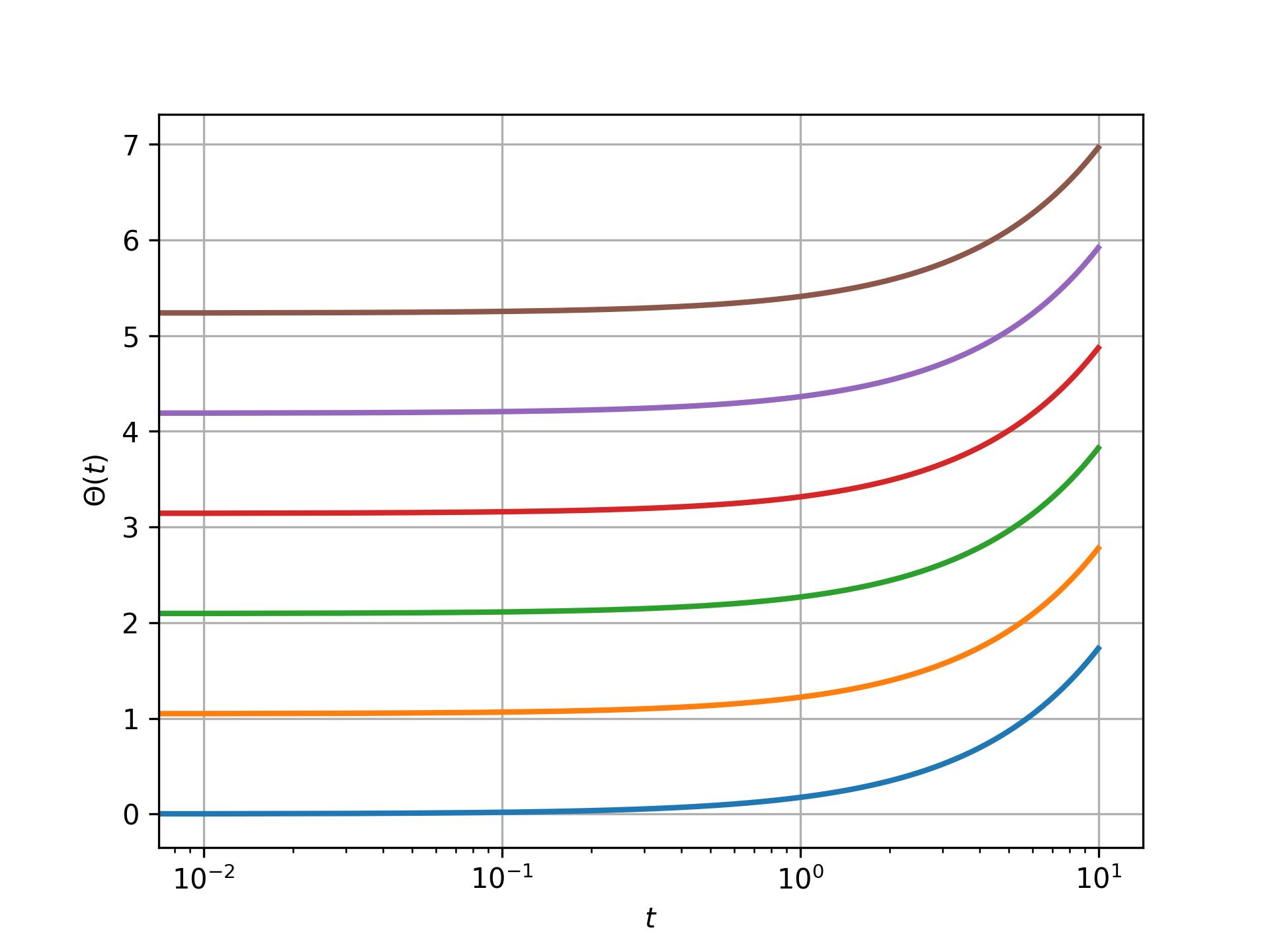}
	\caption{$\Theta(t)$.}
\end{subfigure}%
\hfill
\begin{subfigure}{.5\textwidth}
	\centering
	\includegraphics[width=\textwidth]{./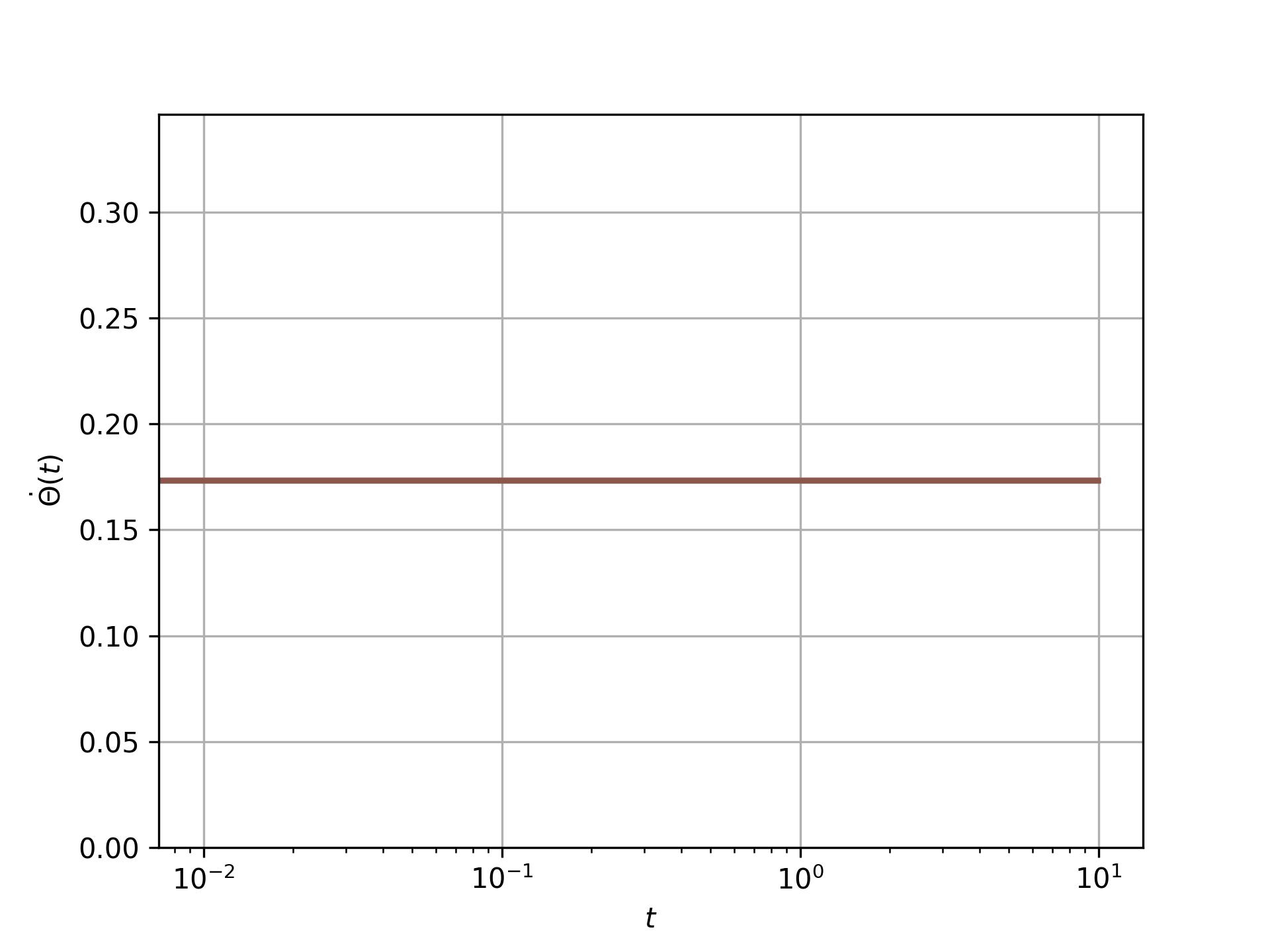}
	\caption{$\dot{\Theta}(t)$.}
\end{subfigure}%

\caption{Solution of the SC Kuramoto model \eqref{eq_max_kuramoto} with $N=6$, $k=0.1$, $\Theta(0) = (0, \pi/3, 2\pi/3, \ldots, 5\pi/3)$ and $D(\Omega)=0$.}
\label{fig:identical_not_max}
\end{figure}

\begin{figure}[t]
\centering
\begin{subfigure}{.5\textwidth}
	\centering
	\includegraphics[width=\textwidth]{./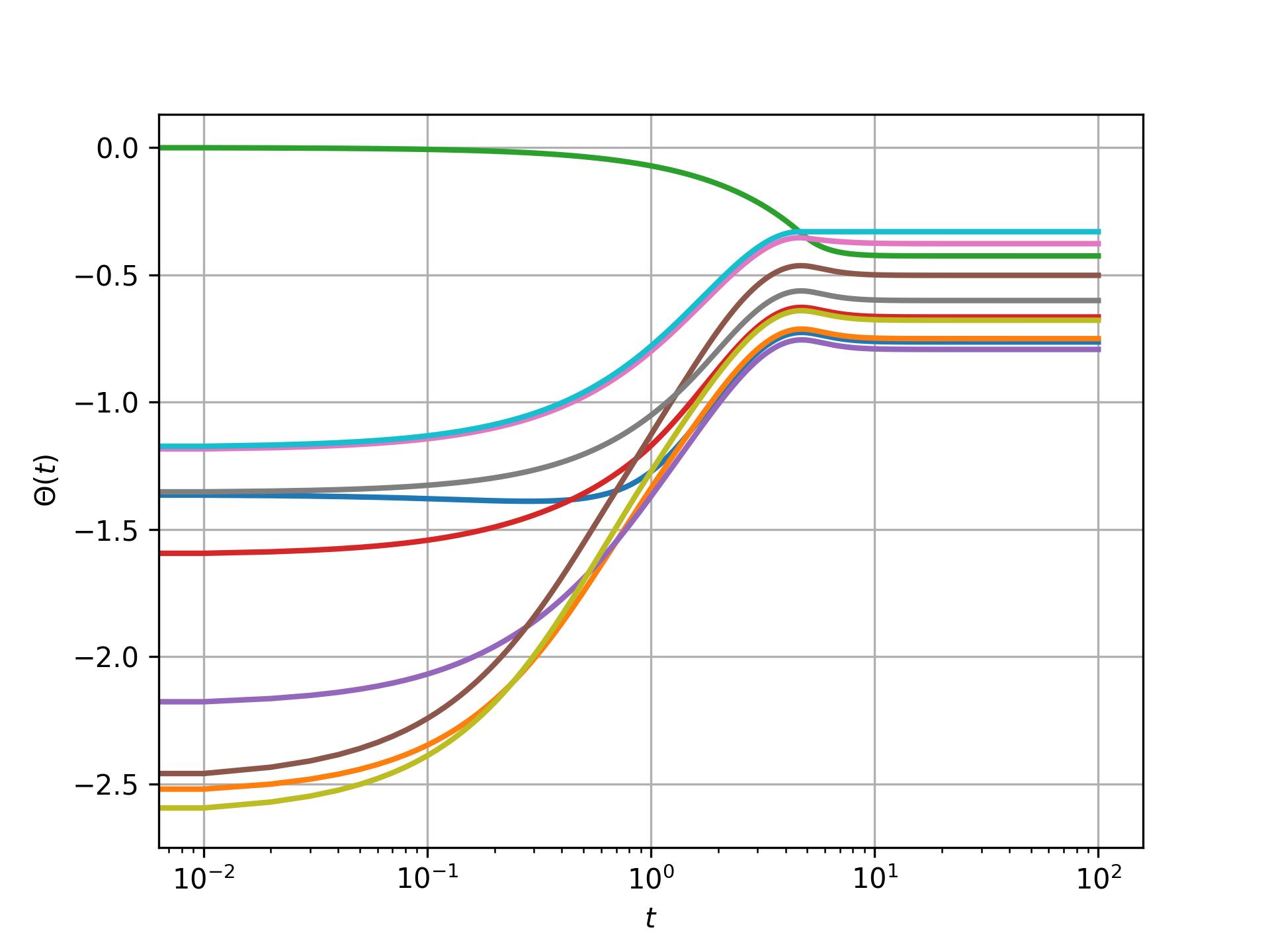}
	\caption{$\Theta(t)$.}
\end{subfigure}%
\hfill
\begin{subfigure}{.5\textwidth}
	\centering
	\includegraphics[width=\textwidth]{./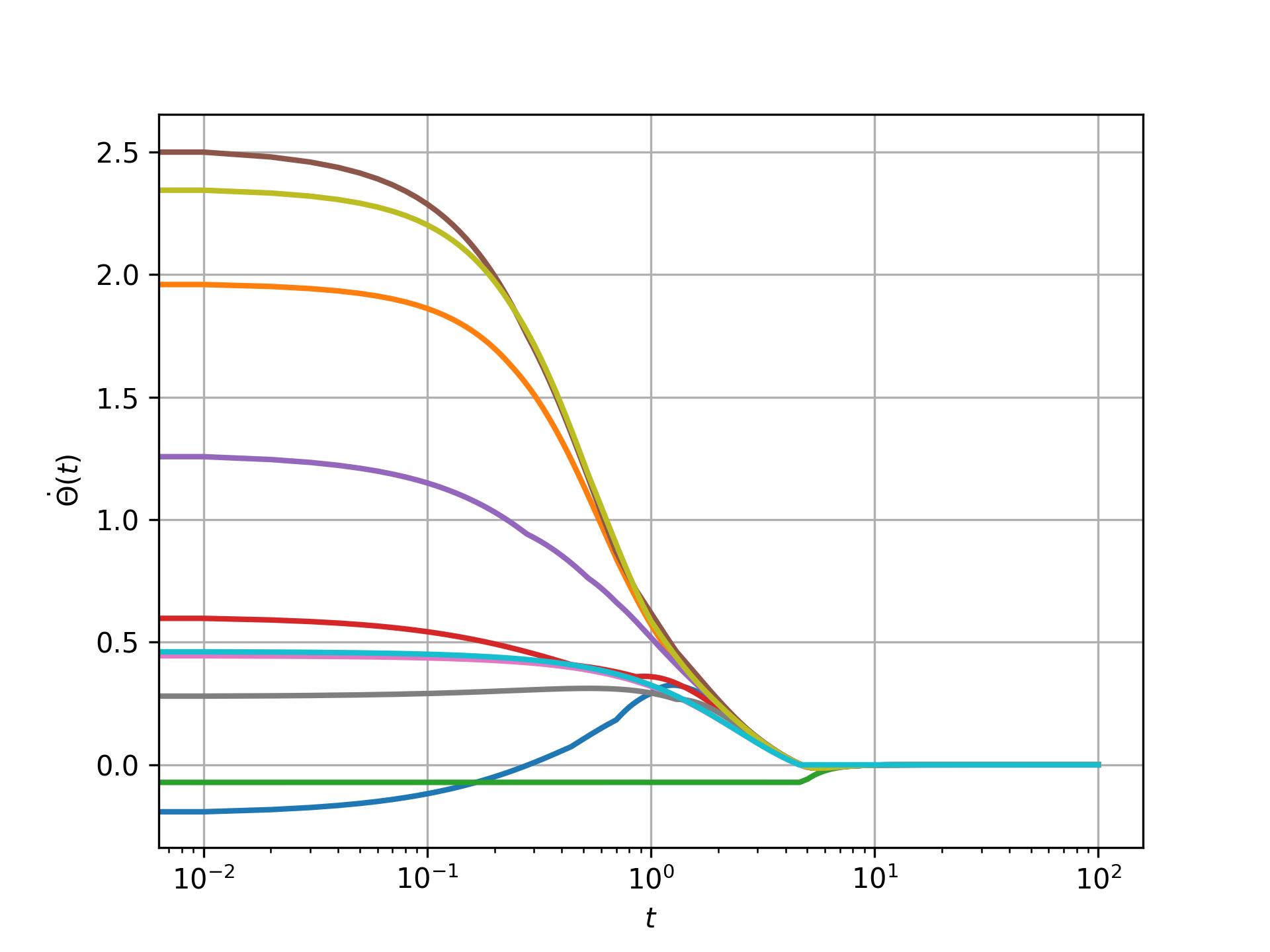}
	\caption{$\dot{\Theta}(t)$.}
\end{subfigure}%

\begin{subfigure}{.5\textwidth}
	\centering
	\includegraphics[width=\textwidth]{./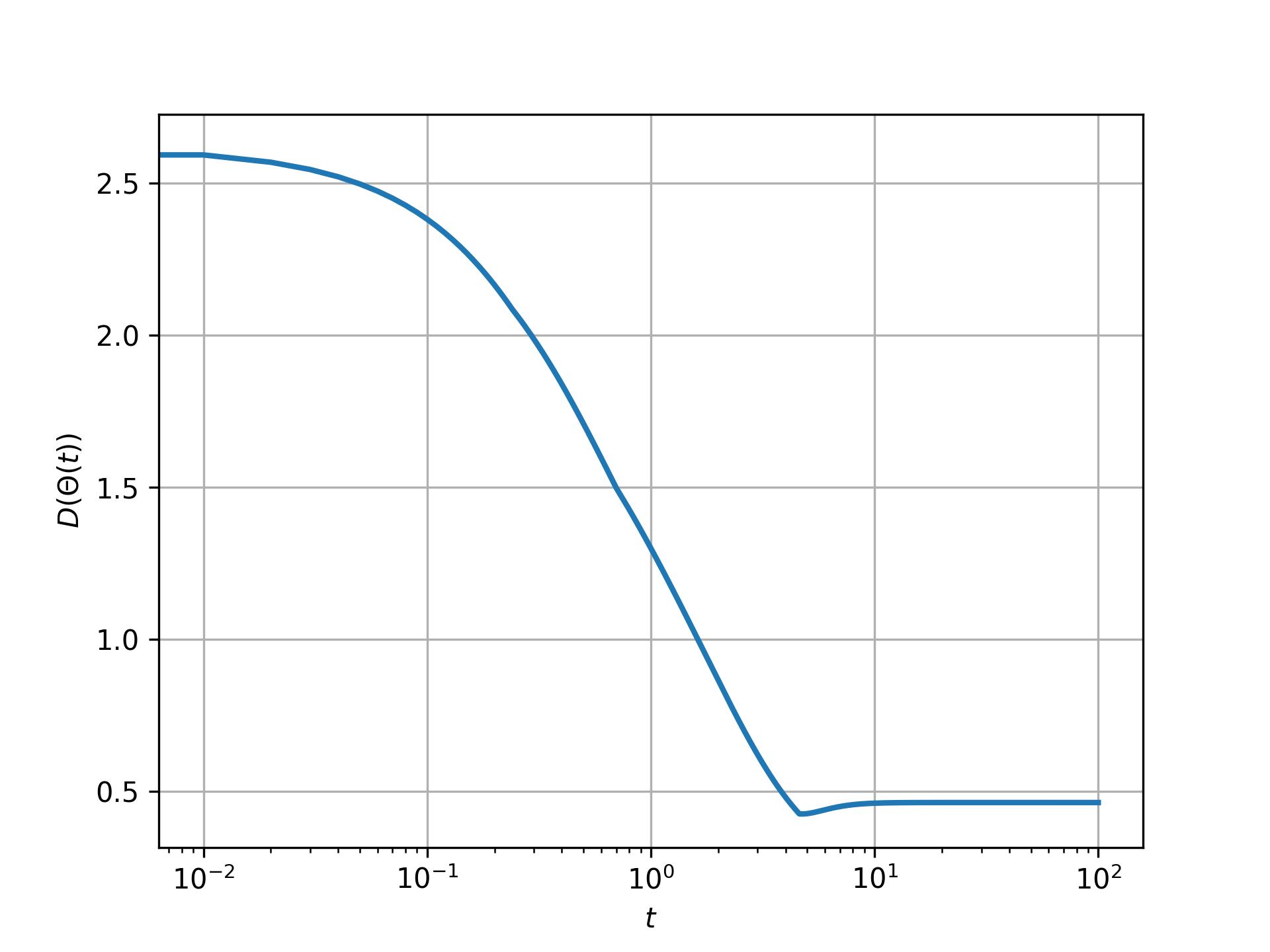}
	\caption{$D(\Theta(t))$}
\end{subfigure}%
\hfill
\begin{subfigure}{.5\textwidth}
	\centering
	\includegraphics[width=\textwidth]{./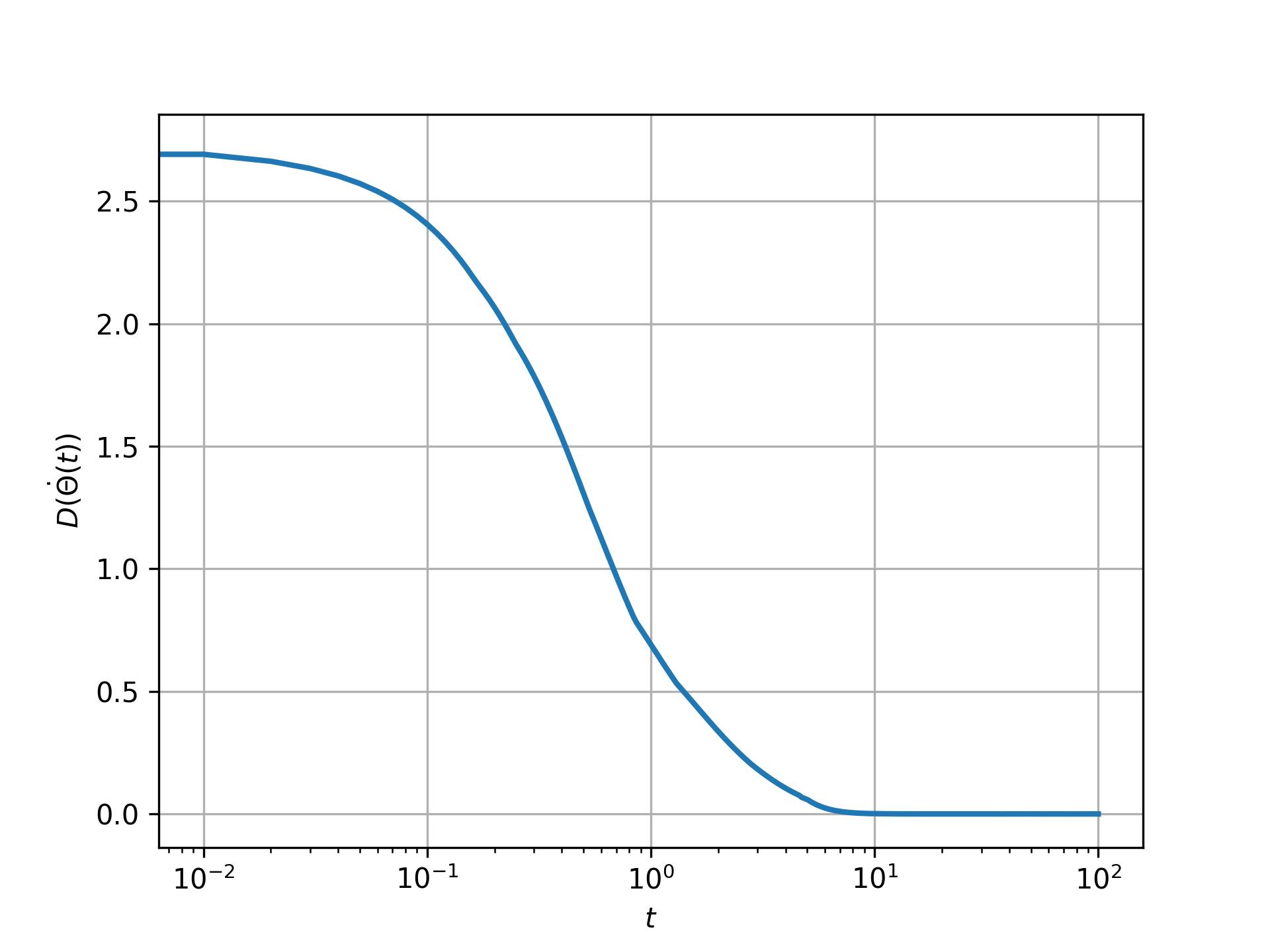}
	\caption{$D(\dot{\Theta}(t))$}
\end{subfigure}%

\caption{Solution of the SC Kuramoto model \eqref{eq_max_kuramoto} with $N=10$, $k=0.5$, $D(\Theta(0))=5\pi/6$ and $D(\Omega)=1$.}
\label{fig:nonidentical_small_k}
\end{figure}

\acks{C.~Hsia is supported in part by NCTS and NSTC with the grant numbers  112-2123-M-002-010-   and    109-2115-M-002-013-MY3.
C.~Tsai is supported by NSTC with the grant number 112-2628-E-002-019-MY3.}

\printbibliography

\end{document}